\numberwithin{equation}{section}
\numberwithin{figure}{section}
  \theoremstyle{remark}
  \newtheorem*{notation*}{\protect\notationname}
\theoremstyle{plain}
\newtheorem{thm}{\protect\theoremname}
  \theoremstyle{definition}
  \theoremstyle{plain}
  \newtheorem{lem}[thm]{\protect\lemmaname}
  \theoremstyle{remark}
  \newtheorem{rem}[thm]{\protect\remarkname}
  \theoremstyle{definition}
\newtheorem{exm}{Example}[section]
  \providecommand{\definitionname}{Definition}
  \providecommand{\lemmaname}{Lemma}
  \providecommand{\notationname}{Notation}
  \providecommand{\remarkname}{Remark}
\providecommand{\theoremname}{Theorem}
\begin{document}

\begin{frontmatter}

\title{The Cauchy problem of coupled elliptic sine-Gordon equations with noise: 
Analysis of a general kernel-based regularization and reliable tools of computing}


\author[aff1]{Vo Anh Khoa}
\ead{khoa.vo@gssi.infn.it, vakhoa.hcmus@gmail.com}

\author[aff2]{Mai Thanh Nhat Truong}
\ead{mtntruong@toantin.org}

\author[aff3]{Nguyen Ho Minh Duy}
\ead{nhmduy.hcmus@gmail.com}

\author[aff4]{Nguyen Huy Tuan\corref{mycorrespondingauthor}}
\cortext[mycorrespondingauthor]{Corresponding author}
\ead{nguyenhuytuan@tdt.edu.vn}

\address[aff1]{Mathematics and Computer Science Division,
Gran Sasso Science Institute, L'Aquila, Italy}
\address[aff2]{Department of Electrical, Electronic and Control Engineering, Hankyong National University, Anseong, Gyeonggi, Republic of Korea}
\address[aff3]{Department of Mathematics and Computer Science, Ho Chi Minh City
University of Science, Ho Chi Minh City, Vietnam}
\address[aff4]{Applied Analysis Research Group, Faculty of Mathematics and Statistics, Ton Duc Thang University, Ho Chi Minh City, Vietnam}

\begin{abstract}
Developments in numerical methods for problems governed by nonlinear
partial differential equations underpin simulations with sound arguments in diverse areas 
of science and engineering. In this paper, we explore the regularization method for the coupled
elliptic sine-Gordon equations along with Cauchy data. The system of equations originates
from the static case of the coupled hyperbolic sine-Gordon equations
modeling the coupled Josephson junctions in superconductivity, and
so far it addresses the Josephson $\pi$-junctions. In general, the Cauchy problem
is not well-posed, and herein the Hadamard-instability occurs drastically. 
Generalizing the kernel-based regularization method, we propose a stable approximate solution. 
Confirmed by the error estimate, this solution strongly converges to the exact solution in 
$L^2$-norm. The main concern of this paper is also with the way to compute the regularized solution formed by 
an alike integral equation. We employ the proposed techniques that successfully approximated the highly 
oscillatory integral, and apply the Picard-like iteration to organize an efficient and reliable tool of computations.
The results are viewed as the improvement as well as the generalization 
of many previous works. The paper is  also accompanied 
by a numerical example that demonstrates the potential of this idea.
\end{abstract}

\begin{keyword}
Elliptic sine-Gordon equations\sep Ill-posedness\sep Regularization methods\sep
Stability\sep Convergence rate \sep Highly oscillatory integrals \sep Iteration
\MSC[2010] 47A52\sep 20M17\sep 26D15\sep 65D30\sep 65D15
\end{keyword}

\end{frontmatter}

\section{Introduction}

The Josephson junction is a quantum mechanical device which is formed
by two superconducting electrodes separated by a very thin insulating
barrier. The Josephson $\pi$-junction is a specific example of that
when no external current or magnetic field is applied. It is the corner
junction made of yttrium barium copper oxide ($d$-wave high-temperature
superconductivity). The fundamental equations modeling such a junction
(see Chen et al. \cite{Chen03}) read
\[
\frac{\partial u}{\partial t}=\frac{2EV}{P},\quad\frac{\partial u}{\partial x}=\left(\frac{2Ed}{P\omega}\right)H_{2},\quad\frac{\partial u}{\partial y}=\left(-\frac{2Ed}{P\omega}\right)H_{1},
\]
\[
J\left(x,y\right)=-J_{0}\left(x,y\right)\sin u,
\]
where $u$ is functional to describe the relative phase between the
superconducting metals $I$ and $II$, causing the Josephson tunneling
current $J$ per unit area which depends on the properties $J_{0}$
of the barrier. Experimentally, $E$ corresponds to the electron charge,
$V$ is considered as a time-and-space-dependent potential difference
across the barrier, $P$ is the Planck constant, $d$ is the constant
with respect to the London penetration depths for the metals, $\omega$
performs the speed of light, and $H_{j}$ represents the $x$ and
$y$ component of the magnetic field.

After substituting those equations into the Maxwell equation, it yields
the barrier equation or the hyperbolic sine-Gordon equation
\begin{equation}
\frac{\partial^{2}u}{\partial x^{2}}+\frac{\partial^{2}u}{\partial y^{2}}-c_{0}\frac{\partial^{2}u}{\partial t^{2}}=-\gamma^{-2}\sin u,\label{eq:1.1}
\end{equation}
where $c_{0}$ depends only on $\omega$ and $\gamma$ includes $\omega,P,E,d$
and $J_{0}$.

Remarkably, the static version of (\ref{eq:1.1}) reads
\[
\frac{\partial^{2}u}{\partial x^{2}}+\frac{\partial^{2}u}{\partial y^{2}}=-\gamma^{-2}\sin u,
\]
and if we proceed the change of variable $v\left(x,y\right)=\pi-u\left(x,y\right)$
under the homogeneous Neumann boundary condition, we are thus concerned
with the equation
\[
\frac{\partial^{2}u}{\partial x^{2}}+\frac{\partial^{2}u}{\partial y^{2}}=\gamma^{-2}\sin u,
\]
which addresses us to the relation of the Josephson $\pi$-junction.

Mathematically speaking, the Neumann condition guarantees the equivalence
between those two kinds of junctions, but the others, for example,
the homogeneous Dirichlet boundary condition, are not really ascertained.

{The system of damped and coupled sine-Gordon equations considered as the dynamical model for the coupled Josephson junctions in \cite{Le84} has the following form:
\[
\frac{\partial^{2}u_{1}}{\partial t^{2}}+\frac{\partial u_{1}}{\partial t}-\Delta u_{1}+\sin u_{1}+k\left(y_{1}-y_{2}\right)=f_{1},
\]
\[
\frac{\partial^{2}u_{2}}{\partial t^{2}}+\frac{\partial u_{2}}{\partial t}-\Delta u_{2}+\sin u_{2}-k\left(y_{1}-y_{2}\right)=f_{2}.
\]

On the other hand, considering the system of coupled Klein-Gordon equations
\[
\frac{\partial^{2}u_{1}}{\partial t^{2}}-\Delta u_{1}=f_{u_{1}}\left(u_{1},u_{2}\right),\quad
\frac{\partial^{2}u_{2}}{\partial t^{2}}-c^2\Delta u_{2}=f_{u_{2}}\left(u_{1},u_{2}\right),
\]
yields another type of coupled sine-Gordon equations 
\[
\frac{\partial^{2}u_{1}}{\partial t^{2}}-\Delta u_{1}=-\delta^{2}\sin\left(u_{1}-u_{2}\right),
\quad
\frac{\partial^{2}u_{2}}{\partial t^{2}}-c^{2}\Delta u_{2}=-\sin\left(u_{1}-u_{2}\right),
\]
if we choose the potential function $f\left(u_{1},u_{2}\right)=\cos\left(\delta u_{1}-u_{2}\right)-1$.

From that motivation, the purpose of this paper is to consider the following
system in two-dimensional:
\begin{equation}
\frac{\partial^{2}u_{1}}{\partial x^{2}}+\alpha_{1}\frac{\partial^{2}u_{1}}{\partial y^{2}}+\gamma_{1}\sin\left(\delta_{11}u_{1}+\delta_{12}u_{2}\right)+\sigma_{11}u_{1}+\sigma_{12}u_{2}=f_{1},\label{eq:1.2}
\end{equation}

\begin{equation}
\frac{\partial^{2}u_{2}}{\partial x^{2}}+\alpha_{2}\frac{\partial^{2}u_{2}}{\partial y^{2}}+\gamma_{2}\sin\left(\delta_{21}u_{1}+\delta_{22}u_{2}\right)+\sigma_{21}u_{1}+\sigma_{22}u_{2}=f_{2},\label{eq:1.3}
\end{equation}
where $\alpha_{i}\in\mathbb{R}_{+},\gamma_{i},\delta_{ij},\sigma_{ij}\in\mathbb{R}$
are physical constants and $f_{i}$ are called forcing functions,
$i,j\in\left\{ 1,2\right\} $.

It is worth noting that in the context of static and long Josephson junctions, $u_{1}$ and $u_{2}$ represent the distribution of two superconducting phase differences between the quantum-mechanical wave functions of four Josephson junctions. We herein aim at determining the distribution on the surface in case we only have the interior measurements.}

There are hundreds of studies on the hyperbolic version of (\ref{eq:1.2})-(\ref{eq:1.3}).
For instance, Levi et al. \cite{Le84} considered the chaotic behaviors
of numerical solutions for the coupled hyperbolic sine-Gordon equations
with damped terms. The more general system can be regarded as the
problem of identification the physical constants by Ha and Nakagiri
in \cite{HN02} and the answer for the question of necessary condition
to gain optimality for those parameters can be found there. In some applications we see, the system of coupled hyperbolic sine-Gordon equations is viewed as the generalization of the
Frenkel-Kontorova dislocation model in \cite{BK98}. On the other side, it can be easily and transparently extended to the soliton excitation in DNA double helices by Yomosa \cite{Yo83}. Up-to-date,
the amount of papers which are related to those problems still increases
without ceasing. We, however, stress that the results as far as we know
for the elliptic sine-Gordon equations are very scarce. In particular,
we only find some theoretical and numerical researches, e.g. in \cite{Chen03,Cap98,NS97,Sha92,Ray06}
and references therein. Additionally, it is worth noting that in principle 
the Cauchy problem for such equations is ill-posed where the stability of solution fails intrinsically. Consequently,
our analysis will underscore the so-called regularization method.

In recent years, various types of regularization methods have been
of continuous interest to researchers in a wide range of disciplines.
We refer the reader to many interesting results in \cite{ARRV09,BD10,Bour07,HDL09,HL00,RT09}.
We notably remark the studies of Tuan and his co-authors on the nonlinear
elliptic equations for which they are strongly relative to our system (\ref{eq:1.2})-(\ref{eq:1.3}) being tackled. In fact, the authors considered
in \cite{TTTK15} the modified method to show that there exists a
filtering kernel stabilizing the exponential instability when the
spectral representation of solution is used. Only a short time ago,
taking a more general nonlinear source term into account, they successfully
obtained in \cite{TTKT15} a new landmark by using the truncation
approach. In light of the aforementioned works, we put ourselves in
the study of the modified integral method to regularize the system (\ref{eq:1.2})-(\ref{eq:1.3}).

Let $\left(0,a\right)\times\left(0,b\right)$ for $a,b>0$, a couple
of real unknown functions $\left(u_{1},u_{2}\right)$ is sought for $\left(x,y\right)\in\left[0,a\right]\times\left[0,b\right]$.
The problem given by (\ref{eq:1.2})-(\ref{eq:1.3}) naturally along
with the no-flux boundary conditions:
\begin{equation}
\frac{\partial u_{i}}{\partial y}\left(x,0\right)=\frac{\partial u_{i}}{\partial y}\left(x,b\right)=0,\quad x\in\left(0,a\right),i\in\left\{1,2\right\},\label{eq:bound}
\end{equation}
and associated with initial conditions
\begin{equation}
u_{i}\left(0,y\right)=u_{0}^{i}\left(y\right),\quad\frac{\partial u_{i}}{\partial x}\left(0,y\right)=u_{1}^{i}\left(y\right),\quad y\in\left(0,b\right),i\in\left\{1,2\right\}.\label{eq:initial}
\end{equation}

In this paper, we consider problem (\ref{eq:1.2})-(\ref{eq:initial}),
and prove stability and convergence results of approximate solutions
constructed by the generalization of the modified integral method. Interestingly, this method is based on 
Fourier-mode in which the highly oscillatory integrals take part. It then motivates us to the use of 
the asymptotic expansions and the Filon-type methods which were introduced in \cite{Olv08,IN04,IN05}. 
Employing such methods and using the famous Picard-like iteration, we analyze 
and design a computational tool to compute the regularized solution. Until now, a rigorous focus on numerical procedures has not been made yet whilst the theoretical analysis grew significantly with a great amount of works. As a consequence, this current paper includes a novelty that makes a huge step in 
the area being considered.

Our paper is shortly organized as follows. Section \ref{sec:2} introduces some preliminaries and
notation, and show the ill-posedness in combination with proposing the regularized solution based on a filtering function. 
We also prepare some auxiliary results in this section. Section \ref{sec:main} is devoted to our main results including 
the well-posedness and convergence of the regularized solution. In Section \ref{sec:compute}, we carefully design a computational tool in the setting of finite-difference methods, and some constraints have been provided.  A numerical example is given in Section \ref{sec:test} to confirm our 
theoretical expectations. A short conclusion in Section \ref{sec:conclusion} closes the paper.

\section{Problem settings and notation}\label{sec:2}

\subsection{Abstract settings}

Let $\left\langle \cdot,\cdot\right\rangle $ be either the scalar
product in $L^{2}$ or the dual pairing of a continuous linear functional
and an element of a functional space. The notation $\left\Vert \cdot\right\Vert _{X}$
stands for the norm in the Banach space $X$. 

In $\mathcal{H}:=L^2(0,b)\times L^2(0,b)$, we define the usual inner product
\[
\left\langle U,V\right\rangle _{\mathcal{H}}:=\left\langle u_{1},v_{1}\right\rangle +\left\langle u_{2},v_{2}\right\rangle ,\quad\forall u_{i},v_{i}\in L^{2}\left(0,b\right),i\in\left\{ 1,2\right\} ,
\]
where $U=\left(u_1,u_2\right)^{t}$ and $V=\left(v_1,v_2\right)^{t}$ and the superscript $\cdot^{t}$ herein denotes the transposed operation, together with the norm
\[
\left\Vert U\right\Vert _{\mathcal{H}}^{2}:=\left\Vert u_{1}\right\Vert _{L^{2}\left(0,b\right)}^{2}+\left\Vert u_{2}\right\Vert _{L^{2}\left(0,b\right)}^{2}.
\]

In $X:=C([0,a];\mathcal{H})$, we define the norm
\[
\left\Vert U\right\Vert _{X}=\sup_{0\le x\le a}\left\Vert U\left(x,\cdot\right)\right\Vert _{\mathcal{H}},
\]

Let us also define the function space
\[
\mathcal{V}_{0}:=\left\{ U\in H^{1}\left(0,b\right)\times H^{1}\left(0,b\right):U_{y}\left(0\right)=U_{y}\left(b\right)=0\right\},
\]
the closed subspace of $\mathcal{V}:=H^{1}\left(0,b\right)\times H^{1}\left(0,b\right)$.

It is significant to remark that the positive-definite, bounded, symmetric operator
\[
\mathcal{A}:=\begin{pmatrix}-\Delta_{y} & 0\\
0 & -\Delta_{y}
\end{pmatrix},\quad-\Delta_{y}=-\frac{\partial^{2}}{\partial y^{2}},
\]
is an isomorphism from $\mathcal{V}_{0}$ to $\mathcal{V}$, and it is self-adjoint in $\mathcal{H}$. Therefore, $\mathcal{A}$ admits an orthonormal eigenbasis $\left\{ \phi_{n}\right\} _{n\in\mathbb{N}}$ in $\mathcal{H}$, associated with the eigenvalue
$\left\{ \lambda_{n}\right\} _{n\in\mathbb{N}}$ satisfying
\[
0<\lambda_{1}<\lambda_{2}<...<\lambda_{n}<...\lim_{n\to\infty}\lambda_{n}=\infty.
\]

Note that $\phi_{n}=\left(\phi_{n}^{1},\phi_{n}^{2}\right)^{t}$ and $\lambda_{n}=\left(\lambda_{n}^{1},\lambda_{n}^{2}\right)^{t}$ whilst the relation "$<$" above indicates $\lambda_{n}^{1}<\lambda_{n+1}^{1}$ and $\lambda_{n}^{2}<\lambda_{n+1}^{2}$. To this end, if not otherwise stated, other relations, e.g. "$\ge$", can be explained in the same meaning.  

We now turn to introduce the abstract Gevrey class of functions of order
$s=\left(s_1,s_2\right)^{t}$ and index $\nu=\left(\nu_1,\nu_2\right)^{t}$ whose entries are all positive, denoted by $\mathbb{G}_{\nu}^{s}$, defined
by
\[
\mathbb{G}_{\nu}^{s}:=\left\{ U\in \mathcal{H}:\sum_{n=1}^{\infty}\left(\left|\lambda_{n}^{1}\right|^{s_1/2}e^{2\nu_{1}\sqrt{\lambda_{n}^{1}}}\left|\left\langle u_{1},\phi_{n}^{1}\right\rangle \right|^{2}+\left|\lambda_{n}^{2}\right|^{s_2/2}e^{2\nu_{2}\sqrt{\lambda_{n}^{2}}}\left|\left\langle u_{2},\phi_{n}^{2}\right\rangle \right|^{2}\right)<\infty\right\} ,
\]
equipped with the norm
\[
\left\Vert U\right\Vert _{\mathbb{G}_{\nu}^{s}}^{2}=\sum_{n=1}^{\infty}\left(\left|\lambda_{n}^{1}\right|^{s_1/2}e^{2\nu_{1}\sqrt{\lambda_{n}^{1}}}\left|\left\langle u_{1},\phi_{n}^{1}\right\rangle \right|^{2}+\left|\lambda_{n}^{2}\right|^{s_2/2}e^{2\nu_{2}\sqrt{\lambda_{n}^{2}}}\left|\left\langle u_{2},\phi_{n}^{2}\right\rangle \right|^{2}\right)<\infty.
\]

This type of spaces forms as a scale of Hilbert spaces with respect to the index $s$ and the inner product is defined by
\[
\left\langle U,V\right\rangle_{\mathbb{G}_{\nu}^{s}}:=\left\langle (-\Delta_{y})^{s_{1}/2}e^{\nu_{1}(-\Delta_{y})^{1/2}}u_{1},(-\Delta_{y})^{s_{1}/2}e^{\nu_{1}(-\Delta_{y})^{1/2}}v_{1}\right\rangle
+\left\langle (-\Delta_{y})^{s_{2}/2}e^{\nu_{2}(-\Delta_{y})^{1/2}}u_{2},(-\Delta_{y})^{s_{2}/2}e^{\nu_{2}(-\Delta_{y})^{1/2}}v_{2}\right\rangle.
\]

To fulfill this subsection, we make the following assumptions:

$\left(\mbox{A}_{1}\right)$ For $i\in \left\{1,2\right\}$, assume $u_{0}^{i},u_{1}^{i}\in L^{2}\left(0,b\right)$ 
and the measured data $u_{0}^{i,\varepsilon},u_{1}^{i,\varepsilon}\in L^{2}\left(0,b\right)$. With the noise level $\varepsilon>0$, the difference between these exact and measured data can be written in $\mathcal{H}$-norm:
\[
\left\Vert U_{0}-U_{0}^{\varepsilon}\right\Vert_{\mathcal{H}} \le\varepsilon,
\quad\left\Vert U_{1}-U_{1}^{\varepsilon}\right\Vert_{\mathcal{H}} \le\varepsilon,
\]
where these notations follow $U$ mentioned above;

$\left(\mbox{A}_{2}\right)$ Suppose that both forcing terms $f_{1},f_{2}$ belong to $C\left(\left[0,a\right];L^{2}\left(0,b\right)\right)$.

Our objective in this paper is to develop the computational foundations,
we further assume that the system (\ref{eq:1.2})-(\ref{eq:initial})
has a unique solution in $\Omega$. In the next subsections, we review briefly 
the mild solution and then the Hadamard-instability is illuminated. Consequently, the modified method is designed together with some results needed to prove our main results.

\subsection{Ill-posedness and the kernel-based regularization method}

As is known, since we have the set of eigenbasis $\left\{\phi_{n}\right\}_{n\in\mathbb{N}}$, it is capable to present the solution of problem (\ref{eq:1.2})-(\ref{eq:initial}) by Fourier-mode, i.e.,
\[
u_{i}\left(x\right)=\sum_{n=1}^{\infty}\left\langle u_{i}\left(x\right),\phi_{n}\right\rangle \phi_{n},\quad i\in\left\{1,2\right\}.
\]

Under the nonlinear spectral theory, the functions $u_{i}$ are called the \emph{mild} solutions of problem (\ref{eq:1.2})-(\ref{eq:initial}) if they belong to $C\left(\left[0,a\right];L^{2}\left(0,b\right)\right)$ and satisfy the integral equations
\begin{eqnarray}
u_{i}\left(x\right) & = & \sum_{n=1}^{\infty}\left[\cosh\left(\sqrt{\alpha_{i}\lambda_{n}^{i}}x\right)\left\langle u_{0}^{i},\phi_{n}^{i}\right\rangle +\frac{\sinh\left(\sqrt{\alpha_{i}\lambda_{n}^{i}}x\right)}{\sqrt{\alpha_{i}\lambda_{n}^{i}}}\left\langle u_{1}^{i},\phi_{n}^{i}\right\rangle \right.\nonumber\\
 &  & \left.+\int_{0}^{x}\frac{\sinh\left(\sqrt{\alpha_{i}\lambda_{n}^{i}}\left(x-\xi\right)\right)}{\sqrt{\alpha_{i}\lambda_{n}^{i}}}\left\langle \mathcal{F}_{i}\left(\xi,u_{1},u_{2}\right),\phi_{n}^{i}\right\rangle d\xi\right]\phi_{n}^{i},\quad i\in\left\{ 1,2\right\}.\label{eq:umild}
\end{eqnarray}
where the function $\mathcal{F}_{i}$ is
given by
\begin{equation}
\mathcal{F}_{i}\left(\xi,u_1,u_2\right)=f_{i}(\xi)-\gamma_{i}\sin\left(\delta_{i1}u_{1}(\xi)+\delta_{i2}u_{2}(\xi)\right)-\sigma_{i1}u_{1}(\xi)-\sigma_{i2}u_{2}(\xi),\quad i\in\left\{1,2\right\}.\label{eq:F1}
\end{equation}

Therefore, let us observe \eqref{eq:umild}
that when $n\to\infty$, the rapid escalation of $\cosh\left(x\right)$
and $\frac{\sinh\left(x\right)}{x}$ tells us the ill-posedness of
the underlying problem in $L^{2}$. A small perturbation in the data,
 described by the assumption $\left(\mbox{A}_{1}\right)$
can arbitrarily show a huge error in computing the solution. Thus, performing
any computation is impossible in this case and then a regularization
method is required.

Now given $\beta=\beta\left(\varepsilon\right)\in (0,1)$, let us define the filtering
function $\Psi_{n,k}^{i,\beta}\left(\beta,x\right):\left[0,a\right]\to\mathbb{R}$ by
\begin{equation}
\Psi_{n,k}^{i,\beta}\left(x\right):=\frac{e^{-\sqrt{\alpha_{i}\lambda_{n}^{i}}\left(a-x\right)}}{2\beta\sqrt{\alpha_{i}^{k}\left|\lambda_{n}^{i}\right|^{k}}+2e^{-\sqrt{\alpha_{i}\lambda_{n}^{i}}a}},\quad n\in\mathbb{N},k\ge1,i\in\left\{ 1,2\right\} .\label{eq:filter}
\end{equation}

This function plays a major role in stabilizing the aforementioned unstable terms since it turns the unstable solution \eqref{eq:umild} into the stable solution in which one can prove well-posedness as well as the strong convergence to the exact solution. It has not escaped our notice that we are proposing the stable approximate solution associated with the measured data. Therefore, the so-called regularized solutions herein are given by
\begin{eqnarray}
u_{i}^{\varepsilon}\left(x\right) & = & \sum_{n=1}^{\infty}\left[\left(\Psi_{n,k}^{i,\beta}\left(x\right)+\frac{e^{-\sqrt{\alpha_{i}\lambda_{n}^{i}}x}}{2}\right)\left\langle u_{0}^{i,\varepsilon},\phi_{n}^{i}\right\rangle +\frac{1}{\sqrt{\alpha_{i}\lambda_{n}^{i}}}\left(\Psi_{n,k}^{i,\beta}\left(x\right)-\frac{e^{-\sqrt{\alpha_{i}\lambda_{n}^{i}}x}}{2}\right)\left\langle u_{1}^{i,\varepsilon},\phi_{n}^{i}\right\rangle \right.\nonumber \\
 &  & \left. +\int_{0}^{x}\frac{1}{\sqrt{\alpha_{i}\lambda_{n}^{i}}}\left(\Psi_{n,k}^{i,\beta}\left(x-\xi\right)-\frac{e^{-\sqrt{\alpha_{i}\lambda_{n}^{i}}\left(x-\xi\right)}}{2}\right)\left\langle \mathcal{F}_{i}\left(\xi,u_{1}^{\varepsilon},u_{2}^{\varepsilon}\right),\phi_{n}^{i}\right\rangle d\xi\right]\phi_{n}^{i},\quad i \in\left\{1,2\right\}.\label{eq:uepmild}
\end{eqnarray}

From here on, we shall move to the last part of this section where some fundamental results are provided to foster our estimates in Section \ref{sec:main}.

\subsection{Auxiliary results}

\begin{lem}\label{lem:boundedness}
(Boundedness of the kernel)
Given $\beta>0,k\ge1$ such that $a^{k}>k\beta$ then for $x\in\left[0,a\right]$
the following inequality holds:
\begin{equation}
\Psi_{n,k}^{i,\beta}\left(x\right)\le\frac{1}{2}\left(ka\right)^{\frac{kx}{a}}\beta^{-\frac{x}{a}}\left(\ln\left(\frac{a^{k}}{k\beta}\right)\right)^{-\frac{kx}{a}},\quad\forall n\in\mathbb{N},i\in\left\{ 1,2\right\} .\label{eq:2.8-1}
\end{equation}
\end{lem}
\begin{proof}
By taking the first derivatives of the function $z\mapsto \left(\beta z^k + e^{-Rz}\right)^{-1}$, we recognize that this function attains its maximum at $z=z_0$ where $z_{0}$
solves the equation $e^{-Rz_{0}}=\frac{k\beta}{R}z_{0}^{k-1}$. At this point we see, it implies that
\begin{equation}
\left(\beta z^k + e^{-Rz}\right)^{-1}\le \left(\beta z_{0}^{k}+\frac{k\beta}{R}z_{0}^{k-1} \right)^{-1},\quad z>0.\label{eq:2.8}
\end{equation}

On the other side, the elementary inequality $e^{Rz_{0}}\ge Rz_{0}$ yields that
\[
\frac{R}{k\beta}=z_{0}^{k-1}e^{Rz_{0}}\le\frac{1}{R^{k-1}}e^{kRz_{0}},
\]
which leads to
\begin{equation}
z_{0}\ge\frac{1}{kR}\ln\left(\frac{R}{k\beta}^{k}\right).\label{z000}
\end{equation}

Combining \eqref{z000} with \eqref{eq:2.8}, we arrive at
\[
\left(\beta z^k + e^{-Rz}\right)^{-1}\le\frac{1}{\beta z_{0}^{k}}\le\frac{1}{\beta}\left(\frac{kR}{\ln\left(\frac{R^{k}}{k\beta}\right)}\right)^{k}.\label{gggg}
\]

Using the expression
\[
\frac{e^{-rz}}{\beta z^k + e^{-Rz}}=\frac{e^{-rz}}{\left(\beta z^{k}+e^{-Rz}\right)^{\frac{r}{R}}\left(\beta z^{k}+e^{-Rz}\right)^{1-\frac{r}{R}}},
\]
then with \eqref{gggg}, we are led to the following estimate: for $0\le r\le R$
\begin{equation}
\frac{e^{-rz}}{\beta z^k + e^{-Rz}}\le\left(\beta z^k + e^{-Rz}\right)^{-1+\frac{r}{R}}\le\left(kR\right)^{k\left(1-\frac{r}{R}\right)}\beta^{\frac{r}{R}-1}\left(\ln\left(\frac{R^{k}}{k\beta}\right)\right)^{k\left(\frac{r}{R}-1\right)}.\label{eq:2.9}
\end{equation}

Henceforward, the proof of the lemma is now straightforward due to replacing
$z=\sqrt{\alpha_{i}\lambda_{n}^{i}},r=a-x$ and $R=a$ in (\ref{eq:2.9}).\end{proof}
\begin{rem}\label{rem:bou}
In (\ref{eq:2.9}), if we take $r=a-x+\xi$ for $x\ge\xi$ then another estimate holds:
\begin{equation}
\Psi_{n,k}^{i,\beta}\left(x-\xi\right)=\frac{e^{-\sqrt{\alpha_{i}\lambda_{n}^{i}}\left(a-x+\xi\right)}}{2\beta\sqrt{\alpha_{i}^{k}\left|\lambda_{n}^{i}\right|^{k}}+2e^{-\sqrt{\alpha_{i}\lambda_{n}^{i}}a}}
\le\frac{1}{2}\left(ka\right)^{\frac{k\left(x-\xi\right)}{a}}\beta^{\frac{\xi-x}{a}}\left(\ln\left(\frac{a^{k}}{k\beta}\right)\right)^{\frac{k\left(\xi-x\right)}{a}},\quad i\in\left\{ 1,2\right\} .\label{eq:2.10}
\end{equation}

\end{rem}
\begin{lem}\label{lem:lipschitzf}
The function $\mathcal{F}_{i}$, $i\in\left\{1,2\right\}$ in \eqref{eq:F1} is globally Lipschitz in the sense that
\begin{equation}
\left|\mathcal{F}_{i}\left(\xi,u_{1},u_{2}\right)-\mathcal{F}_{i}\left(\xi,v_{1},v_{2}\right)\right|\le\left(\left|\gamma_{i}\right|\left|\delta_{i1}\right|+\left|\sigma_{i1}\right|\right)\left|u_{1}-v_{1}\right|+\left(\left|\gamma_{i}\right|\left|\delta_{i2}\right|+\left|\sigma_{i2}\right|\right)\left|u_{2}-v_{2}\right|,\label{eq:2.17}
\end{equation}
\end{lem}
\begin{proof}
	The proof of the lemma is clear. Indeed, we follow the structure of the forcing functions to get
	\[
	\left|\mathcal{F}_{i}\left(\xi,u_{1},u_{2}\right)-\mathcal{F}_{i}\left(\xi,v_{1},v_{2}\right)\right|\le
	\gamma\left|\sin\left(\delta_{i1}u_{1}+\delta_{i2}u_{2}\right) -\sin\left(\delta_{i1}v_{1}+\delta_{i2}v_{2}\right)\right|.
	\]
	
	We then apply the elementary inequality $\left|\sin U-\sin V\right|\le \left|U-V\right|$ for $U,V\in\mathbb{R}$ and thus simplify the resulting estimate to gain the desired structural inequality.  
\end{proof}

\section{Main results}
\label{sec:main}
In this section, we explore two main points: the qualitative analysis of the modified solutions \eqref{eq:uepmild} and the convergence rate which tells us how fast one can approximate a pair of solutions $U=\left(u_1,u_2\right)^{t}$, the solutions of problem \eqref{eq:1.2}-\eqref{eq:1.3}. Normally, the notion of the regularization method is to approximate the ill-posed problem by using an orthoprojection in $L^2$ onto the subspace spanned by the set $\left\{\phi_{n}\right\}_{n\in \mathbb{N}}$, then the new problem is well-posed. Some works that follow such strategies can be referred to \cite{TTK15,TTKT15,TTTK15}. Likewise, we commence the first part by proving the existence and uniqueness of \eqref{eq:uepmild} based on the fixed-point argument. In parallel to this work, we lump together several estimates to gain the stability analysis. In the second part, we confirm the strong convergence in $L^2$ by investigating the error estimate between \eqref{eq:uepmild} and \eqref{eq:umild}.
\subsection{Well-posedness}
We notably mention that (\ref{eq:uepmild}) expresses precisely the integral equation $u^{\varepsilon}\left(x\right)=\mathcal{G}\left(u^{\varepsilon}\right)\left(x\right)$
where $\mathcal{G}$ defined by the right-hand side of (\ref{eq:uepmild})
maps from $X=C\left(\left[0,a\right];\mathcal{H}\right)$
into itself. At this stage, we may see the boundedness of the filtering function in (\ref{eq:2.8-1})
and (\ref{eq:2.10}) and the global Lipschitz in \eqref{eq:2.17} of the
functional $\mathcal{F}_{i}$ are very handy.

\begin{thm}\label{thm:unique}
(Existence and uniqueness)
The system of integral equations (\ref{eq:uepmild}) has a unique solution $U^{\varepsilon}\in C([0,a];\mathcal{H})$ for each $\varepsilon>0$.
\end{thm}
\begin{proof}
As mentioned before, the proof follows the well-known Banach fixed-point argument. At first, we rewrite the system \eqref{eq:uepmild} in the vectorial case in which $U^{\varepsilon} = (u_{1}^{\varepsilon},u_{2}^{\varepsilon})^{t}$ obeys the Volterra-type integral equation:
\begin{equation}
U^{\varepsilon}(x) = H^{\varepsilon}(x) + \int_{0}^{x}K^{\varepsilon}(x,\xi,U^{\varepsilon}(\xi))d\xi,
\end{equation}
where
\[
H^{\varepsilon}\left(x\right)=\left(\begin{array}{c}
\displaystyle{\sum_{n=1}^{\infty}}\varphi_{n}^{1,\varepsilon}\left(x\right)\phi_{n}^{1}\\
\\
\displaystyle{\sum_{n=1}^{\infty}}\varphi_{n}^{2,\varepsilon}\left(x\right)\phi_{n}^{2}
\end{array}\right),\quad
K^{\varepsilon}\left(x,\xi,U^{\varepsilon}\right)=\left(\begin{array}{c}
{\displaystyle \sum_{n=1}^{\infty}}\kappa_{n}^{1,\varepsilon}\left(x-\xi\right)\left\langle \mathcal{F}_{1}\left(\xi,u_{1}^{\varepsilon},u_{2}^{\varepsilon}\right),\phi_{n}^{1}\right\rangle \phi_{n}^{1}\\
\\
{\displaystyle \sum_{n=1}^{\infty}}\kappa_{n}^{2,\varepsilon}\left(x-\xi\right)\left\langle \mathcal{F}_{2}\left(\xi,u_{1}^{\varepsilon},u_{2}^{\varepsilon}\right),\phi_{n}^{2}\right\rangle \phi_{n}^{2}
\end{array}\right),
\]
and
\[
\varphi_{n}^{i,\varepsilon}\left(x\right)=\left(\Psi_{n,k}^{i,\beta}\left(x\right)+\frac{e^{-\sqrt{\alpha_{i}\lambda_{n}^{i}}x}}{2}\right)\left\langle u_{0}^{i,\varepsilon},\phi_{n}^{i}\right\rangle +\frac{1}{\sqrt{\alpha_{i}\lambda_{n}^{i}}}\left(\Psi_{n,k}^{i,\beta}\left(x\right)-\frac{e^{-\sqrt{\alpha_{i}\lambda_{n}^{i}}x}}{2}\right)\left\langle u_{1}^{i,\varepsilon},\phi_{n}^{i}\right\rangle,
\]
\[
\kappa_{n}^{i,\varepsilon}\left(x-\xi\right)=\frac{1}{\sqrt{\alpha_{i}\lambda_{n}^{i}}}\left(\Psi_{n,k}^{i,\beta}\left(x-\xi\right)-\frac{e^{-\sqrt{\alpha_{i}\lambda_{n}^{i}}\left(x-\xi\right)}}{2}\right),\quad i\in\left\{ 1,2\right\} .
\]

We now set the operator $\mathcal{G}:C([0,a];\mathcal{H})\to C([0,a];\mathcal{H})$ by
\[
\mathcal{G}(W)(x) = H^{\varepsilon}(x) + \int_{0}^{x}K^{\varepsilon}(x,\xi,W(\xi))d\xi,\quad \forall W\in C([0,a];\mathcal{H}).
\]

Taking a glimpse of the fact that for $W=\left(w_{1},w_{2}\right)^{t},\bar{W}=\left(\bar{w}_{1},\bar{w}_{2}\right)^{t}\in C([0,a];\mathcal{H})$ we do the subtraction
 \[
\mathcal{G}\left(W\right)\left(x\right)-\mathcal{G}\left(\bar{W}\right)\left(x\right)=\int_{0}^{x}\left(\begin{array}{c}
{\displaystyle \sum_{n=1}^{\infty}}\kappa_{n}^{1,\varepsilon}\left(x-\xi\right)\left\langle \mathcal{F}_{1}\left(\xi,w_{1},w_{2}\right)-\mathcal{F}_{1}\left(\xi,\bar{w}_{1},\bar{w}_{2}\right),\phi_{n}^{1}\right\rangle \phi_{n}^{1}\\
\\
{\displaystyle \sum_{n=1}^{\infty}}\kappa_{n}^{2,\varepsilon}\left(x-\xi\right)\left\langle \mathcal{F}_{2}\left(\xi,w_{1},w_{2}\right)-\mathcal{F}_{2}\left(\xi,\bar{w}_{1},\bar{w}_{2}\right),\phi_{n}^{2}\right\rangle \phi_{n}^{2}
\end{array}\right)d\xi,
\]
then via the definition of the $\mathcal{H}$-norm with the aid of Parseval's relation taking into account each $L^2$-norm and using H\"older's inequality, we arrive at
\begin{equation}
\left\Vert \mathcal{G}\left(W\right)\left(x\right)-\mathcal{G}\left(\bar{W}\right)\left(x\right)\right\Vert _{\mathcal{H}}^{2}\le
\sum_{n=1}^{\infty}\int_{0}^{x}\sum_{i\in\left\{ 1,2\right\} }\left|\kappa_{n}^{i,\varepsilon}\left(x-\xi\right)\right|^{2}d\xi
\int_{0}^{x}\left|\left\langle \mathcal{F}_{i}\left(\xi,w_{1},w_{2}\right)-\mathcal{F}_{i}\left(\xi,\bar{w}_{1},\bar{w}_{2}\right),\phi_{n}^{i}\right\rangle \right|^{2}d\xi.\label{eq:GG1}
\end{equation}

To explore the proof, we only need to prove that there exists $m_{0}\in\mathbb{N}$ such that the operator $\mathcal{G}^{m_0}:=\mathcal{G}(\mathcal{G}^{m_0-1})$ which also maps from $X=C([0,a];\mathcal{H})$ onto itself, is a contraction. In other words, we shall find out that there exists $K\in(0,1)$ such that
\[
\left\Vert \mathcal{G}^{m_{0}}\left(W\right)-\mathcal{G}^{m_{0}}\left(\bar{W}\right)\right\Vert _{X}\le K\left\Vert W-\bar{W}\right\Vert _{X}. 
\]

In fact, we prove by mathematical induction that for all $m\in \mathbb{N}$, the following estimate holds:
\begin{equation}
\left\Vert \mathcal{G}^{m}\left(W\right)\left(x\right)-\mathcal{G}^{m}\left(\bar{W}\right)\left(x\right)\right\Vert _{\mathcal{H}}^{2}
\le \frac{x^m}{m!}L_{\beta}^{2m}\left\Vert W-\bar{W}\right\Vert _{X}^{2},\quad \forall x\in [0,a],\label{eq:Gintro}
\end{equation}
where $L_{\beta}>0$ is a constant given by
\[
L_{\beta}^{2}=\frac{a\beta^{-2}}{2}\max\left\{ 1,\left(ka\right)^{2k}\right\} \sum_{i,j\in\left\{ 1,2\right\} }\frac{\left(\left|\gamma_{i}\right|\left|\delta_{ij}\right|+\left|\sigma_{ij}\right|\right)^{2}}{\alpha_{i}\lambda_{1}^{i}}.
\]

The base step, i.e. $m=1$, is trivial by the estimate \eqref{eq:GG1}. Using the elementary inequality
\[
(ka)^{\frac{kx}{a}}\le\max\left\{1,(ka)^{k}\right\},\quad\forall x\in [0,a],
\]
and with $\beta\in (0,1)$ sufficiently small to gain
\[
\left(\ln\left(\frac{a^{k}}{k\beta}\right)\right)^{\frac{-kx}{a}}\le 1,\quad \forall x\in [0,a],
\]
we obtain another type of boundedness of the kernel (obviously, it is not rigorous as Lemma \ref{lem:boundedness}):
\begin{equation}
\Psi_{n,k}^{i,\beta}\left(x\right)\le\frac{1}{2}\max\left\{1,(ka)^{k}\right\}\beta^{-1},
\quad\forall n\in\mathbb{N},i\in\left\{ 1,2\right\},\forall x\in [0,a].
\end{equation} 

In addition, owing to $\kappa_{n}^{i,\varepsilon}(x-\xi)\le \Psi_{n,k}^{i,\beta}\left(x\right)/\sqrt{\alpha_{i}\lambda_{1}^{i}}$ in combination with Lemma \ref{lem:lipschitzf}, we find that
\[
\left|\kappa_{n}^{i,\varepsilon}\left(x-\xi\right)\right|^{2}
\le \frac{\beta^{-2}}{4\alpha_{i}\lambda_{1}^{i}}\max\left\{1,(ka)^{2k}\right\},
\]
and
\[
\left|\left\langle \mathcal{F}_{i}\left(\xi,w_{1},w_{2}\right)-\mathcal{F}_{i}\left(\xi,\bar{w}_{1},\bar{w}_{2}\right),\phi_{n}^{i}\right\rangle \right|^{2}
\le \sum_{j\in\left\{1,2\right\}}(|\gamma_{i}||\delta_{ij}|+|\sigma_{ij}|)^2
\left|\left\langle w_{j}(\xi,\cdot)-\bar{w}_{j}(\xi,\cdot),\phi_{n}^{i}\right\rangle\right|^2.
\]

Therefore, we continue to estimate \eqref{eq:GG1} by
\begin{eqnarray}
\left\Vert \mathcal{G}\left(W\right)\left(x\right)-\mathcal{G}\left(\bar{W}\right)\left(x\right)\right\Vert _{\mathcal{H}}^{2} & \le & \frac{a\beta^{-2}}{2}\max\left\{ 1,\left(ka\right)^{2k}\right\} \sum_{i,j\in\left\{ 1,2\right\} }\frac{\left(\left|\gamma_{i}\right|\left|\delta_{ij}\right|+\left|\sigma_{ij}\right|\right)^{2}}{\alpha_{i}\lambda_{1}^{i}}\int_{0}^{x}\left\Vert W\left(\xi,\cdot\right)-\bar{W}\left(\xi,\cdot\right)\right\Vert _{\mathcal{H}}^{2}d\xi \nonumber\\
 & \le & L_{\beta}^{2}x\left\Vert W-\bar{W}\right\Vert _{X}^{2},\label{eq:GG2}
\end{eqnarray}
which closes the base case, and we state that \eqref{eq:Gintro} holds for $m=1$.

In the inductive step, we suppose that \eqref{eq:Gintro} holds for $m=r$, then prove that it also holds for $m=r+1$. Certainly, the path of estimating this case comes into the estimates as well as inequalities in the above base case. Thus, we complete this step by the direct estimate from \eqref{eq:GG2}:
\begin{eqnarray*}
\left\Vert \mathcal{G}^{r+1}\left(W\right)\left(x\right)-\mathcal{G}^{r+1}\left(\bar{W}\right)\left(x\right)\right\Vert _{\mathcal{H}}^{2} & \le & L_{\beta}^{2}\int_{0}^{x}\left\Vert \mathcal{G}^{r}\left(W\right)\left(\xi\right)-\mathcal{G}^{r}\left(\bar{W}\right)\left(\xi\right)\right\Vert _{\mathcal{H}}^{2}d\xi\\
 & \le & L_{\beta}^{2}\int_{0}^{x}L_{\beta}^{2r}\frac{\xi^{r}}{r!}\left\Vert W-\bar{W}\right\Vert _{X}^{2}d\xi\\
 & \le & \frac{x^{r+1}}{\left(r+1\right)!}L_{\beta}^{2\left(r+1\right)}\left\Vert W-\bar{W}\right\Vert _{X}^{2}.
\end{eqnarray*}

Hence, \eqref{eq:Gintro} is true for all $m\in\mathbb{N}$.

At the aforementioned point we expected, there exists $m_0\in\mathbb{N}$ such that the operator $\mathcal{G}^{m_0}$ is a contraction since
\[
\lim_{m\to\infty}\frac{x^m}{m!}L_{\beta}^{2m}=0.
\] 
As a result, this fact yields the existence and uniqueness of the solution $U^{\varepsilon}\in C([0,a];\mathcal{H})$ of the equation $W=\mathcal{G}^{m_0}(W)$. Besides, taking $\mathcal{G}$ into both sides of this equation we know that $\mathcal{G}^{m_0}(\mathcal{G}(U^{\varepsilon}))=\mathcal{G}(\mathcal{G}^{m_0}(U^{\varepsilon}))=\mathcal{G}(U^{\varepsilon})$. Henceforward, the uniqueness of the fixed point of the operator $\mathcal{G}^{m_0}$ greatly confirms $\mathcal{G}^{m_0}(U^{\varepsilon})=U^{\varepsilon}$. Accordingly, the equation $W=\mathcal{G}^{m_0}(W)$ admits a unique solution in $C([0,a];\mathcal{H})$, and we then end up with the proof of the theorem.

\end{proof}

\begin{thm}
(Stability)
\label{thm:3}If $U^{\varepsilon}=\left(u_{1}^{\varepsilon},u_{2}^{\varepsilon}\right)^{t}$ and $V^{\varepsilon}=\left(v_{1}^{\varepsilon},v_{2}^{\varepsilon}\right)^{t}$
are two pairs of solutions to (\ref{eq:uepmild}),
respectively, corresponding to the initial states $\left(U_{0}^{\varepsilon},U_{1}^{\varepsilon}\right)$
and $\left(V_{0}^{\varepsilon},V_{1}^{\varepsilon}\right)$,
then for all $0\le x\le a$, the stability estimate holds:
\[
\left\Vert U^{\varepsilon}\left(x,\cdot\right)-V^{\varepsilon}\left(x,\cdot\right)\right\Vert ^{2}_{\mathcal{H}}
\le C_{a}\left(ka\right)^{\frac{2kx}{a}}\beta^{-\frac{2x}{a}}\left(\ln\left(\frac{a^{k}}{k\beta}\right)\right)^{-\frac{2kx}{a}}\left(\left\Vert U_{0}^{\varepsilon}-V_{0}^{\varepsilon}\right\Vert ^{2}_{\mathcal{H}}
+\left\Vert U_{1}^{\varepsilon}-V_{1}^{\varepsilon}\right\Vert ^{2}_{\mathcal{H}}\right),
\]
where $C_{a}$ is a generic positive constant only depending on $a,\alpha_{1},\alpha_{2},\lambda_{1},\gamma_{1},\gamma_{2},\delta_{11},\delta_{12}, \delta_{21},\delta_{22},\sigma_{11},\sigma_{12},\sigma_{21},\\ \sigma_{22}$.
\end{thm}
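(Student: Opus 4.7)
The plan is to subtract the two representations (\ref{eq:uepmild}) corresponding to $(u^{\epsilon},v^{\epsilon})$ and $(U^{\epsilon},V^{\epsilon})$ coefficient-by-coefficient (and likewise (\ref{eq:vepmild})), then apply Parseval's identity, invoke Lemma~1 and Remark~2 to eliminate the $n$-dependence inside the Fourier coefficients, and close the resulting Volterra-type inequality by Gronwall.

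First I would denote $B(x):=\tfrac{1}{2}(ka)^{kx/a}\beta^{-x/a}\bigl(\ln(a^{k}/(k\beta))\bigr)^{-kx/a}$, the universal filter bound from Lemma~1 (independent of $n$ and $i$). Subtracting the mild formulas and applying $(p+q+r)^{2}\le 3(p^{2}+q^{2}+r^{2})$ together with Parseval, I would split $\|u^{\epsilon}(x)-U^{\epsilon}(x)\|^{2}$ into an $\langle u_{0}^{\epsilon}-U_{0}^{\epsilon},\phi_{n}\rangle$-piece, an $\langle u_{1}^{\epsilon}-U_{1}^{\epsilon},\phi_{n}\rangle$-piece, and an integral piece. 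In the first two, the multiplier is $\Psi_{n,k,1}(\beta,x)\pm\tfrac12 e^{-\sqrt{\alpha_{1}\lambda_{n}}x}$ (times possibly $1/\sqrt{\alpha_{1}\lambda_{n}}\le 1/\sqrt{\alpha_{1}\lambda_{1}}$), and Lemma~1 together with $e^{-\sqrt{\alpha_{1}\lambda_{n}}x}\le 1$ yields a uniform bound $\le C\,B(x)$; squaring and summing in $n$ gives
\[
\le C\,B(x)^{2}\bigl(\|u_{0}^{\epsilon}-U_{0}^{\epsilon}\|^{2}+\|u_{1}^{\epsilon}-U_{1}^{\epsilon}\|^{2}\bigr).
\]

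For the nonlinear piece, I would use Remark~2 to bound the kernel $\Psi_{n,k,1}(\beta,x-\xi)\pm\tfrac12 e^{-\sqrt{\alpha_{1}\lambda_{n}}(x-\xi)}$ by $C\,B(x-\xi)$ uniformly in $n$, and exploit that $\mathcal{F}_{1}$ in (\ref{eq:F1}) is globally Lipschitz in $(u,v)$ (with constant depending only on $\gamma_{1},\delta_{11},\delta_{12},\sigma_{11},\sigma_{12}$), since $\sin$ is $1$-Lipschitz. Cauchy--Schwarz in $\xi$ and Parseval then yield, after adding the analogous inequality for $v^{\epsilon}-V^{\epsilon}$,
\[
\Psi(x)\;\le\;C\,B(x)^{2}D\;+\;C\int_{0}^{x}B(x-\xi)^{2}\Psi(\xi)\,d\xi,
\]
where $\Psi(x):=\|u^{\epsilon}(x)-U^{\epsilon}(x)\|^{2}+\|v^{\epsilon}(x)-V^{\epsilon}(x)\|^{2}$ and $D:=\sum_{i\in\{0,1\}}(\|u_{i}^{\epsilon}-U_{i}^{\epsilon}\|^{2}+\|v_{i}^{\epsilon}-V_{i}^{\epsilon}\|^{2})$.

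Finally I would set $\Theta(x):=\Psi(x)/B(x)^{2}$ and use the exponential form $B(x)=\tfrac12 e^{\mu x}$ with $\mu=\tfrac{1}{a}[k\ln(ka)-\ln\beta-k\ln\ln(a^{k}/(k\beta))]$, which gives the key identity $B(x-\xi)\,B(\xi)=\tfrac12 B(x)$. Dividing the displayed inequality by $B(x)^{2}$ collapses the kernel and produces the standard form $\Theta(x)\le C\,D+C\int_{0}^{x}\Theta(\xi)\,d\xi$; Gronwall's inequality then yields $\Theta(x)\le C\,D\,e^{Ca}$, i.e.\ the claimed bound with $C_{a}$ absorbing $e^{Ca}$ and the Lipschitz/physical constants. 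The genuinely delicate step is the bookkeeping in Step~4: one must normalize by $B(x)^{2}$ and use the multiplicative identity $B(x-\xi)B(\xi)\propto B(x)$ to reduce the non-autonomous Volterra inequality to a Gronwall-integrable one; the remainder of the argument is routine.
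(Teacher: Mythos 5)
Your proposal is correct and follows essentially the same route as the paper: subtract the mild formulas, use Parseval together with the filter bounds of Lemma 1 and Remark 2 and the global Lipschitz property of $\mathcal{F}_{1},\mathcal{F}_{2}$, then normalize by the exponential-in-$x$ weight so the Volterra kernel collapses and Gronwall closes the estimate. The only caveat is that bounding the terms $\tfrac{1}{2}e^{-\sqrt{\alpha_{i}\lambda_{n}}x}$ by $C\,B(x)$ uniformly in $\beta$ tacitly requires $\beta$ small enough that $B(x)\ge\tfrac{1}{2}$ on $[0,a]$, which is exactly the smallness assumption the paper imposes explicitly in its proof.
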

\begin{proof}
From (\ref{eq:uepmild}), we take the inner product with $\phi_{n}^{i}$ and subtract each other, then deduce that for $i\in\left\{1,2\right\}$
\begin{eqnarray*}
\left|\left\langle u_{i}^{\varepsilon}\left(x,\cdot\right)-v_{i}^{\varepsilon}\left(x,\cdot\right),\phi_{n}^{i}\right\rangle \right| 
& \le & \left(\Psi_{n,k}^{i,\beta}\left(x\right)+\frac{e^{-\sqrt{\alpha_{i}\lambda_{n}^{i}}x}}{2}\right)\left|\left\langle u_{0}^{i,\varepsilon}-v_{0}^{i,\varepsilon},\phi_{n}^{i}\right\rangle \right|\\
 &  & +\frac{1}{\sqrt{\alpha_{i}\lambda_{n}^{i}}}\left|\Psi_{n,k}^{i,\beta}\left(x\right)-\frac{e^{-\sqrt{\alpha_{i}\lambda_{n}^{i}}x}}{2}\right|\left|\left\langle u_{1}^{i,\varepsilon}-v_{1}^{i,\varepsilon},\phi_{n}^{i}\right\rangle \right|\\
 &  & +\int_{0}^{x}\frac{1}{\sqrt{\alpha_{i}\lambda_{n}^{i}}}\left|\Psi_{n,k}^{i,\beta}\left(x-\xi\right)-\frac{e^{-\sqrt{\alpha_{i}\lambda_{n}^{i}}\left(x-\xi\right)}}{2}\right|\left|\left\langle \mathcal{F}_{i}\left(\xi,u_{1}^{\varepsilon},u_{2}^{\varepsilon}\right)-\mathcal{F}_{i}\left(\xi,v_{1}^{\varepsilon},v_{2}^{\varepsilon}\right),\phi_{n}^{i}\right\rangle \right|d\xi.
\end{eqnarray*}

Once again, the effect of Lemma \ref{lem:boundedness}-\ref{lem:lipschitzf} is available. In fact, by Parseval's relation in accordance with \eqref{eq:2.8-1} and \eqref{eq:2.17} and the very basic inequality $\left(a+b\right)^{2}\le2\left(a^{2}+b^{2}\right)$, we get
\begin{eqnarray}
d\left(x\right)
 & \le & \frac{1}{2}\left(\left(ka\right)^{\frac{2kx}{a}}\beta^{-\frac{2x}{a}}\left(\ln\left(\frac{a^{k}}{k\beta}\right)\right)^{-\frac{2kx}{a}}+1\right)\left\Vert U_{0}^{\varepsilon}-V_{0}^{\varepsilon}\right\Vert ^{2}_{\mathcal{H}}\nonumber \\
 &  & +\frac{1}{\alpha\lambda_{1}}\left(\left(ka\right)^{\frac{2kx}{a}}\beta^{-\frac{2x}{a}}\left(\ln\left(\frac{a^{k}}{k\beta}\right)\right)^{-\frac{2kx}{a}}+1\right)\left\Vert U_{1}^{\varepsilon}-V_{1}^{\varepsilon}\right\Vert ^{2}_{\mathcal{H}}\nonumber \\
 &  & +\int_{0}^{x}\frac{C}{\alpha\lambda_{1}}\left(\left(ka\right)^{\frac{2k\left(x-\xi\right)}{a}}\beta^{\frac{2\left(\xi-x\right)}{a}}\left(\ln\left(\frac{a^{k}}{k\beta}\right)\right)^{\frac{2k\left(\xi-x\right)}{a}}+1\right)d\left(\xi\right)d\xi,\label{eq:w}
\end{eqnarray}
where $\lambda_{1}$ is the smallest eigenvalues and for the sake of simplicity, we have denoted
\[
d\left(x\right)=\left\Vert U^{\varepsilon}\left(x,\cdot\right)-V^{\varepsilon}\left(x,\cdot\right)\right\Vert ^{2}_{\mathcal{H}},\;
\alpha=\min\left\{ \alpha_{1},\alpha_{2}\right\} >0,\;
C=\sum_{i,j\in\left\{ 1,2\right\} }\left(\left|\gamma_{i}\right|\left|\delta_{ij}\right|+\left|\sigma_{ij}\right|\right)^{2}.
\]

Assume that $\beta\in\left(0,1\right)$ is small enough such that for all $x\in\left[0,a\right]$
\[
\left(ka\right)^{\frac{2kx}{a}}\beta^{-\frac{2x}{a}}\left(\ln\left(\frac{a^{k}}{k\beta}\right)\right)^{-\frac{2kx}{a}}\ge1
\]
holds, it then follows from (\ref{eq:w}) that
\begin{eqnarray}
d\left(x\right) 
& \le & \left(ka\right)^{\frac{2kx}{a}}\beta^{-\frac{2x}{a}}\left(\ln\left(\frac{a^{k}}{k\beta}\right)\right)^{-\frac{2kx}{a}}\left[\left\Vert U_{0}^{\varepsilon}-V_{0}^{\varepsilon}\right\Vert ^{2}_{\mathcal{H}}
+\frac{2}{\alpha\lambda_{1}}\left(\left\Vert U_{1}^{\varepsilon}-V_{1}^{\varepsilon}\right\Vert ^{2}_{\mathcal{H}}\right)\right]\nonumber \\
 &  & +\frac{2C}{\alpha\lambda_{1}}\int_{0}^{x}\left(ka\right)^{\frac{2k\left(x-\xi\right)}{a}}\beta^{\frac{2\left(\xi-x\right)}{a}}\left(\ln\left(\frac{a^{k}}{k\beta}\right)\right)^{\frac{2k\left(\xi-x\right)}{a}}d\left(\xi\right)d\xi.\label{eq:d}
\end{eqnarray}

Multiplying both sides of (\ref{eq:d}) by $\left(ka\right)^{-\frac{2kx}{a}}\beta^{\frac{2x}{a}}\left(\ln\left(\frac{a^{k}}{k\beta}\right)\right)^{\frac{2kx}{a}}$
and putting 
\[w\left(x\right)=\left(ka\right)^{-\frac{2kx}{a}}\beta^{\frac{2x}{a}}\left(\ln\left(\frac{a^{k}}{k\beta}\right)\right)^{\frac{2kx}{a}}d\left(x\right),
\]
and thanks to Gronwall's inequality, we obtain
\[
w\left(x\right)\le\left[\left\Vert U_{0}^{\varepsilon}-V_{0}^{\varepsilon}\right\Vert ^{2}_{\mathcal{H}}
+\frac{2}{\alpha\lambda_{1}}\left(\left\Vert U_{1}^{\varepsilon}-V_{1}^{\varepsilon}\right\Vert ^{2}_{\mathcal{H}}\right)\right]\mbox{exp}\left(\frac{2Cx}{\alpha\lambda_{1}}\right),
\]
or accordingly, it can be written as
\begin{equation}
d\left(x\right)\le\left(ka\right)^{\frac{2kx}{a}}\beta^{-\frac{2x}{a}}\left(\ln\left(\frac{a^{k}}{k\beta}\right)\right)^{-\frac{2kx}{a}}\left[\left\Vert U_{0}^{\varepsilon}-V_{0}^{\varepsilon}\right\Vert ^{2}_{\mathcal{H}}
+\frac{2}{\alpha\lambda_{1}}\left(\left\Vert U_{1}^{\varepsilon}-V_{1}^{\varepsilon}\right\Vert ^{2}_{\mathcal{H}}
\right)\right]\mbox{exp}\left(\frac{2Cx}{\alpha\lambda_{1}}\right).\label{eq:2.18}
\end{equation}

This completes the proof of the theorem.
\end{proof}
\subsection{Error estimate}

In the scenarios of proving error estimates, one usually starts off with the question of finding the bridge where the link between the exact solution \eqref{eq:umild} and the regularized solution \eqref{eq:uepmild} is pointed out. The clue of applying the triangle inequality to handle such estimates addresses us to define the pseudo-regularized solution which corresponds to the exact data. The word "pseudo" herein comes from the fact that in real-world applications, there is no point to construct this solution due to the measurement. Essentially, we have the following bullets:
\begin{enumerate}
\item We define the pseudo-regularized solution $\mathcal{U}^{\varepsilon}:=\left(\bar{u}_{1}^{\varepsilon},\bar{u}_{2}^{\varepsilon}\right)^{t}$ in the same manner with \eqref{eq:uepmild}:
\begin{eqnarray}
\bar{u}_{i}^{\varepsilon}\left(x\right) & = & \sum_{n=1}^{\infty}\left[\left(\Psi_{n,k}^{i,\beta}\left(x\right)+\frac{e^{-\sqrt{\alpha_{i}\lambda_{n}^{i}}x}}{2}\right)\left\langle u_{0}^{i},\phi_{n}^{i}\right\rangle +\frac{1}{\sqrt{\alpha_{i}\lambda_{n}^{i}}}\left(\Psi_{n,k}^{i,\beta}\left(x\right)-\frac{e^{-\sqrt{\alpha_{i}\lambda_{n}^{i}}x}}{2}\right)\left\langle u_{1}^{i},\phi_{n}^{i}\right\rangle \right. \nonumber \\
 &  & \left. +\int_{0}^{x}\frac{1}{\sqrt{\alpha_{i}\lambda_{n}^{i}}}\left(\Psi_{n,k}^{i,\beta}\left(x-\xi\right)-\frac{e^{-\sqrt{\alpha_{i}\lambda_{n}^{i}}\left(x-\xi\right)}}{2}\right) \left\langle \mathcal{F}_{i}\left(\xi,\bar{u}_{1}^{\varepsilon},\bar{u}_{2}^{\varepsilon}\right),\phi_{n}^{i}\right\rangle d\xi\right]\phi_{n}^{i}, \nonumber \\
 &  & \quad i\in\left\{1,2\right\};\label{eq:uepmild-1}
\end{eqnarray}
\item Identify appropriate function spaces that are helpful for us to estimate the errors between \eqref{eq:umild} and \eqref{eq:uepmild-1} as well as \eqref{eq:uepmild-1} and \eqref{eq:uepmild}.
\end{enumerate}
We also remark that the error between \eqref{eq:uepmild-1} and \eqref{eq:uepmild} has been made in Theorem \ref{thm:3}. Then, with the aid of the assumption $\left(\mbox{A}_{1}\right)$, the following theorem is easy to prove.

\begin{thm}
\label{thm:est2}
Let $U^{\varepsilon}=(u_{1}^{\varepsilon},u_{2}^{\varepsilon})^{t}$ and $\mathcal{U}^{\varepsilon}=\left(\bar{u}_{1}^{\varepsilon},\bar{u}_{2}^{\varepsilon}\right)^{t}$ be the regularized solution in \eqref{eq:uepmild} and pseudo-regularized solution in \eqref{eq:uepmild-1} to \eqref{eq:umild}, respectively. Then for all $0\le x\le a$, the following estimate holds:
\[
\left\Vert U^{\varepsilon}\left(x,\cdot\right)-{\mathcal{U}}^{\varepsilon}\left(x,\cdot\right)\right\Vert ^{2}_{\mathcal{H}}
\le C_{a}\left(ka\right)^{\frac{2kx}{a}}\beta^{-\frac{2x}{a}}\varepsilon^{2}\left(\ln\left(\frac{a^{k}}{k\beta}\right)\right)^{-\frac{2kx}{a}},
\]
where $C_{a}$ is a generic positive constant only depending on $a,\alpha_{1},\alpha_{2},\lambda_{1},\gamma_{1},\gamma_{2},\delta_{11},\delta_{12},\delta_{21},\delta_{22},\sigma_{11},\sigma_{12},\sigma_{21},\\ \sigma_{22}$.
\end{thm}

It is now sufficient to prove the error between \eqref{eq:umild} and \eqref{eq:uepmild-1}. When doing so, one concerns the second point above. As a matter of fact, the Gevrey-type classes as introduced in Section \ref{sec:2} are considered rigorously.

\begin{thm}
\label{thm:4}
Suppose that the mild solution $U$ whose entries are defined in (\ref{eq:umild}) satisfies $U\in C\left(\left[0,a\right];\mathbb{G}_{\nu}^{s_{1}}\right),U_{x}\in C\left(\left[0,a\right];\mathbb{G}_{\nu}^{s_{2}}\right)$
with
\begin{equation}
\nu_{1}\ge a\sqrt{\alpha_{1}},
\nu_{2}\ge a\sqrt{\alpha_{2}}
\;\mbox{and}\;
s_{1}=k,s_{2}=k-1.\label{cond:gevrey}
\end{equation}
Then for $\beta\in\left(0,1\right)$ the
error estimate between $U$
and $\mathcal{U}^{\varepsilon}$ in \eqref{eq:uepmild-1}
is uniformly given by
\[
\left\Vert U\left(x,\cdot\right)-\mathcal{U}^{\varepsilon}\left(x,\cdot\right)\right\Vert ^{2}_{\mathcal{H}}
\le C_{a}\left(ka\right)^{\frac{2kx}{a}}\beta^{2\left(1-\frac{x}{a}\right)}\left(\ln\left(\frac{a^{k}}{k\beta}\right)\right)^{-\frac{2kx}{a}},
\]
where $C_{a}$ is a generic positive constant only depending on $a,k,\alpha_{1},\alpha_{2},\lambda_{1},\gamma_{1},\gamma_{2},\delta_{11},\delta_{12},\delta_{21},\delta_{22},\sigma_{11},\sigma_{12},\sigma_{21},\\ \sigma_{22}$.\end{thm}
\begin{proof}
Observe (\ref{eq:umild}), it is easy to verify
that
\begin{equation}
\left\langle u_{i}\left(x,\cdot\right),\phi_{n}^{i}\right\rangle +\frac{\left\langle u^{i}_{x}\left(x,\cdot\right),\phi_{n}^{i}\right\rangle }{\sqrt{\alpha_{i}\lambda_{n}^{i}}}=e^{\sqrt{\alpha_{i}\lambda_{n}^{i}}x}\left(\left\langle u_{0}^{i},\phi_{n}^{i}\right\rangle +\frac{\left\langle u_{1}^{i},\phi_{n}^{i}\right\rangle }{\sqrt{\alpha_{i}\lambda_{n}^{i}}}+\int_{0}^{x}\frac{e^{-\sqrt{\alpha_{i}\lambda_{n}^{i}}\xi}}{\sqrt{\alpha_{i}\lambda_{n}^{i}}}\left\langle \mathcal{F}_{i}\left(\xi,u_{1},u_{2}\right),\phi_{n}^{i}\right\rangle d\xi\right),i\in\left\{1,2\right\}.\label{eq:biendoi}
\end{equation}

Putting $d_{i}\left(x\right)=u_{i}\left(x\right)-\bar{u}_{i}^{\varepsilon}\left(x\right)$ for $i\in\left\{1,2\right\}$, we use \eqref{eq:biendoi}
 to obtain the following expression:
\begin{eqnarray}
d_{i}\left(x\right) & = & \sum_{n=1}^{\infty}\frac{\beta\sqrt{\alpha_{i}^{k}\left|\lambda_{n}^{i}\right|^{k}}}{2\beta\sqrt{\alpha_{i}^{k}\left|\lambda_{n}^{i}\right|^{k}}+2e^{-\sqrt{\alpha_{i}\lambda_{n}^{i}}x}}\left(\left\langle u_{i}\left(x,\cdot\right),\phi_{n}^{i}\right\rangle 
+\frac{\left\langle u_{x}^{i}\left(x,\cdot\right),\phi_{n}^{i}\right\rangle }{\sqrt{\alpha_{i}\lambda_{n}^{i}}}\right)\phi_{n}^{i}\nonumber\\
 &  & +\sum_{n=1}^{\infty}\int_{0}^{x}\frac{1}{\sqrt{\alpha_{i}\lambda_{n}^{i}}}\left(\Psi_{n,k}^{i,\beta}\left(x-\xi\right)-\frac{e^{-\sqrt{\alpha_{i}\lambda_{n}^{i}}\left(x-\xi\right)}}{2}\right)\left\langle \mathcal{F}_{i}\left(\xi,u_{1},u_{2}\right)-\mathcal{F}_{i}\left(\xi,\bar{u}_{1}^{\varepsilon},\bar{u}_{2}^{\varepsilon}\right),\phi_{n}^{i}\right\rangle d\xi\phi_{n}^{i}.\label{eq:biendoi1}
\end{eqnarray}

Therefore, by Parseval's relation, we
get from \eqref{eq:biendoi1} that
\begin{eqnarray}
\left\Vert d_{i}\left(x,\cdot\right)\right\Vert ^{2}_{L^2(0,b)}
& \le & 2\beta^{2}\sum_{n=1}^{\infty}\left|\Psi_{n,k}^{i,\beta}\left(x\right)\right|^{2}e^{2\sqrt{\alpha_{i}\lambda_{n}^{i}}\left(a-x\right)}\left(\sqrt{\alpha_{i}^{k}\left|\lambda_{n}^{i}\right|^{k}}\left\langle u_{i}\left(x,\cdot\right),\phi_{n}^{i}\right\rangle +\sqrt{\alpha_{i}^{k-1}\left|\lambda_{n}^{i}\right|^{k-1}}\left\langle u^{i}_{x}\left(x,\cdot\right),\phi_{n}^{i}\right\rangle \right)^{2}\nonumber\\
 &  & +2a^{2}\sum_{n=1}^{\infty}\int_{0}^{x}\frac{1}{\alpha_{i}\lambda_{n}^{i}}\left(\Psi_{n,k}^{i,\beta}\left(x-\xi\right)-\frac{e^{-\sqrt{\alpha_{i}\lambda_{n}^{i}}\left(x-\xi\right)}}{2}\right)^{2} \nonumber \\
 &  &\left|\left\langle \mathcal{F}_{i}\left(\xi,u_{1},u_{2}\right)-\mathcal{F}_{i}\left(\xi,\bar{u}_{1}^{\varepsilon},\bar{u}_{2}^{\varepsilon}\right),\phi_{n}^{i}\right\rangle \right|^{2}d\xi,\label{eq:dd1}
\end{eqnarray}
and (\ref{eq:filter}) together
with (\ref{eq:2.8-1}) and (\ref{eq:2.10}), the right-hand side of \eqref{eq:dd1} can be estimated by the following term:
\begin{eqnarray}
\beta^{2}\left(ka\right)^{\frac{2kx}{a}}\beta^{-\frac{2x}{a}}\left(\ln\left(\frac{a^{k}}{k\beta}\right)\right)^{-\frac{2kx}{a}}\alpha_{i}^{k-1}\sum_{n=1}^{\infty}e^{2\sqrt{\alpha_{i}\lambda_{n}^{i}}\left(a-x\right)}\left(\alpha_{i}\left|\lambda_{n}^{i}\right|^{k}\left|\left\langle u_{i}\left(x,\cdot\right),\phi_{n}^{i}\right\rangle \right|^{2}+\left|\lambda_{n}^{i}\right|^{k-1}\left|\left\langle u^{i}_{x}\left(x,\cdot\right),\phi_{n}^{i}\right\rangle \right|^{2}\right)\nonumber \\
+\frac{a^{2}}{\alpha_{i}\lambda_{1}}\int_{0}^{x}\left(ka\right)^{\frac{k\left(x-\xi\right)}{a}}\beta^{\frac{\xi-x}{a}}\left(\ln\left(\frac{a^{k}}{k\beta}\right)\right)^{\frac{k\left(\xi-x\right)}{a}}\left\Vert \mathcal{F}_{i}\left(\xi,u_{1},u_{2}\right)-\mathcal{F}_{i}\left(\xi,\bar{u}_{1}^{\varepsilon},\bar{u}_{2}^{\varepsilon}\right)\right\Vert ^{2}d\xi.\label{eq:dddd}
\end{eqnarray}

Thanks to the global Lipschitz property (\ref{eq:2.17}) of $\mathcal{F}_{i}$ we employ the Gevrey-type assumptions to combine \eqref{eq:dd1}-\eqref{eq:dddd}. Essentially, after taking summation in $i$ we arrive at
\begin{eqnarray}
\left\Vert U\left(x,\cdot\right)-\mathcal{U}^{\varepsilon}\left(x,\cdot\right)\right\Vert ^{2}_{\mathcal{H}}
 & \le & \beta^{2}\left(ka\right)^{\frac{2kx}{a}}\beta^{-\frac{2x}{a}}\left(\ln\left(\frac{a^{k}}{k\beta}\right)\right)^{-\frac{2kx}{a}}\bar{\alpha}\left(\alpha_{1}^{k-1}+\alpha_{2}^{k-1}\right)e^{a-x}\left(\left\Vert U\right\Vert _{C\left(\left[0,a\right];\mathbb{G}_{\nu}^{s_{1}}\right)}^{2}+\left\Vert U_{x}\right\Vert _{C\left(\left[0,a\right];\mathbb{G}_{\nu}^{s_{2}}\right)}^{2}\right)\nonumber \\
 &  & +\frac{2a^{2}}{\lambda_{1}}\left(\frac{C_{1}}{\alpha_{1}}+\frac{C_{2}}{\alpha_{2}}\right)\int_{0}^{x}\left(ka\right)^{\frac{k\left(x-\xi\right)}{a}}\beta^{\frac{\xi-x}{a}}\left(\ln\left(\frac{a^{k}}{k\beta}\right)\right)^{\frac{k\left(\xi-x\right)}{a}}\left\Vert U\left(\xi,\cdot\right)-\mathcal{U}^{\varepsilon}\left(\xi,\cdot\right)\right\Vert ^{2}_{\mathcal{H}}d\xi,
 \label{eq:aa}
\end{eqnarray}
where we have used the fact that 
$\left\Vert U\left(x,\cdot\right)-\mathcal{U}^{\varepsilon}\left(x,\cdot\right)\right\Vert ^{2}_{\mathcal{H}}=
\left\Vert d_{1}\left(x,\cdot\right)\right\Vert^{2}_{L^2(0,b)}+\left\Vert d_{2}\left(x,\cdot\right)\right\Vert^{2}_{L^2(0,b)}$ and denoted $\bar{\alpha}=\max\left\{\alpha_{1},\alpha_{2}\right\}$ and
\[
C_{1}=\left(\left|\gamma_{1}\right|\left|\delta_{11}\right|+\left|\sigma_{11}\right|\right)^{2}+\left(\left|\gamma_{1}\right|\left|\delta_{12}\right|+\left|\sigma_{12}\right|\right)^{2},
\]
\[
C_{2}=\left(\left|\gamma_{2}\right|\left|\delta_{21}\right|+\left|\sigma_{21}\right|\right)^{2}+\left(\left|\gamma_{2}\right|\left|\delta_{22}\right|+\left|\sigma_{22}\right|\right)^{2}.
\]

On the other hand, (\ref{eq:aa}) can be rewritten as
\[
w\left(x\right)\le\beta^{2}\bar{\alpha}\left(\alpha_{1}^{k-1}+\alpha_{2}^{k-1}\right)e^{a-x}C_{a}+\frac{2a^{2}}{\lambda_{1}}\left(\frac{C_{1}}{\alpha_{1}}+\frac{C_{2}}{\alpha_{2}}\right)\int_{0}^{x}w\left(\xi\right)d\xi.
\]
where we have denoted
\[
w\left(x\right)=\left(ka\right)^{-\frac{2kx}{a}}\beta^{\frac{2x}{a}}\left(\ln\left(\frac{a^{k}}{k\beta}\right)\right)^{\frac{2kx}{a}}\left\Vert U\left(x,\cdot\right)-\mathcal{U}^{\varepsilon}\left(x,\cdot\right)\right\Vert ^{2}_{\mathcal{H}},
\]
\[
C_{a}=\left\Vert U\right\Vert _{C\left(\left[0,a\right];\mathbb{G}_{\nu}^{s_{1}}\right)}^{2}+\left\Vert U_{x}\right\Vert _{C\left(\left[0,a\right];\mathbb{G}_{\nu}^{s_{2}}\right)}^{2}.
\]

By Gronwall's inequality, we thus obtain
\[
\left\Vert U\left(x,\cdot\right)-\mathcal{U}^{\varepsilon}\left(x,\cdot\right)\right\Vert ^{2}_{\mathcal{H}}
\le C_{a}\bar{\alpha}\left(\alpha_{1}^{k-1}+\alpha_{2}^{k-1}\right)\left(ka\right)^{\frac{2kx}{a}}\beta^{2\left(1-\frac{x}{a}\right)}\left(\ln\left(\frac{a^{k}}{k\beta}\right)\right)^{-\frac{2kx}{a}}\mbox{exp}\left(\frac{2a^{2}x}{\lambda_{1}}\left(\frac{C_{1}}{\alpha_{1}}+\frac{C_{2}}{\alpha_{2}}\right)+a-x\right),
\]
which leads to the proof of the theorem.\end{proof}
\begin{thm}
\label{thm:5}
(Error estimate)
If $\beta=\varepsilon^{m}$ for $m\in\left(0,1\right]$,
then the error estimate between the
exact solution $U$ given by (\ref{eq:umild})
and the regularized solution $U^{\varepsilon}$
given by (\ref{eq:uepmild}) is
\[
\left\Vert u\left(x\right)-u^{\varepsilon}\left(x\right)\right\Vert ^{2}+\left\Vert v\left(x\right)-v^{\varepsilon}\left(x\right)\right\Vert ^{2}\le C_{a}\left(ka\right)^{\frac{2kx}{a}}\left(\varepsilon^{2m\left(1-\frac{x}{a}\right)}+\varepsilon^{2\left(1-\frac{mx}{a}\right)}\right)\left(\ln\left(\frac{a^{k}}{k\beta}\right)\right)^{-\frac{2kx}{a}},
\]
for all $0\le x\le a$, where $C_{a}$ is a positive constant only
depending on $a,k,\alpha_{1},\alpha_{2},\lambda_{1},\gamma_{1},\gamma_{2},\delta_{11},\delta_{12},\delta_{21},\delta_{22},\\ \sigma_{11},\sigma_{12},\sigma_{21},\sigma_{22}$.

As a consequence, we have $u_{i}^{\varepsilon}(x,\cdot)\to u_{i}(x,\cdot)$ strongly in $L^2(0,b)$ for all $0\le x\le a$ and $i\in\left\{1,2\right\}$ as $\varepsilon\to 0^{+}$.
\end{thm}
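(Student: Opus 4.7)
The plan is to reduce Theorem~\ref{thm:5} to the two theorems already proven by inserting the auxiliary pair $(\mathcal{U}^{\epsilon},\mathcal{V}^{\epsilon})$ from Theorem~\ref{thm:4} between the exact solution $(u,v)$ and the regularized solution $(u^{\epsilon},v^{\epsilon})$. By the elementary inequality $(a+b)^{2}\le 2(a^{2}+b^{2})$, I would start from
\[
\left\Vert u(x)-u^{\epsilon}(x)\right\Vert ^{2}+\left\Vert v(x)-v^{\epsilon}(x)\right\Vert ^{2}\le 2\left(\left\Vert u(x)-\mathcal{U}^{\epsilon}(x)\right\Vert ^{2}+\left\Vert v(x)-\mathcal{V}^{\epsilon}(x)\right\Vert ^{2}\right)+2\left(\left\Vert \mathcal{U}^{\epsilon}(x)-u^{\epsilon}(x)\right\Vert ^{2}+\left\Vert \mathcal{V}^{\epsilon}(x)-v^{\epsilon}(x)\right\Vert ^{2}\right),
\]
and bound the two bracketed terms separately.

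The first bracket is handled verbatim by Theorem~\ref{thm:4}, which yields
\[
\left\Vert u(x)-\mathcal{U}^{\epsilon}(x)\right\Vert ^{2}+\left\Vert v(x)-\mathcal{V}^{\epsilon}(x)\right\Vert ^{2}\le C_{a}(ka)^{\frac{2kx}{a}}\beta^{2(1-\frac{x}{a})}\left(\ln\left(\frac{a^{k}}{k\beta}\right)\right)^{-\frac{2kx}{a}}.
\]
For the second bracket, the crucial observation is that $(\mathcal{U}^{\epsilon},\mathcal{V}^{\epsilon})$ and $(u^{\epsilon},v^{\epsilon})$ solve the same fixed-point equations \eqref{eq:uepmild}--\eqref{eq:vepmild} but with initial data $(u_{0},u_{1},v_{0},v_{1})$ and $(u_{0}^{\epsilon},u_{1}^{\epsilon},v_{0}^{\epsilon},v_{1}^{\epsilon})$ respectively. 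Therefore I can apply the stability statement of Theorem~\ref{thm:3} with these two data sets, and use assumption $(\mathrm{A}_{1})$ to bound each of the four data-difference norms by $\epsilon$; this gives
\[
\left\Vert \mathcal{U}^{\epsilon}(x)-u^{\epsilon}(x)\right\Vert ^{2}+\left\Vert \mathcal{V}^{\epsilon}(x)-v^{\epsilon}(x)\right\Vert ^{2}\le 4C_{a}(ka)^{\frac{2kx}{a}}\beta^{-\frac{2x}{a}}\left(\ln\left(\frac{a^{k}}{k\beta}\right)\right)^{-\frac{2kx}{a}}\epsilon^{2}.
\]

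Finally I would substitute the parameter choice $\beta=\epsilon^{m}$, $m\in(0,1]$: this turns $\beta^{2(1-x/a)}$ into $\epsilon^{2m(1-x/a)}$ and turns $\beta^{-2x/a}\epsilon^{2}$ into $\epsilon^{2(1-mx/a)}$. Adding the two contributions and absorbing constants into a new $C_{a}$ produces precisely the bound claimed in Theorem~\ref{thm:5}. No step involves a genuine obstacle beyond bookkeeping of constants and verifying that $\beta=\epsilon^{m}\in(0,1)$ is small enough for the smallness condition on $\beta$ in Theorems~\ref{thm:3} and \ref{thm:4} to apply (which is automatic as $\epsilon\to 0^{+}$), so the proof is essentially a one-line triangle inequality followed by direct invocation of the two previous theorems.
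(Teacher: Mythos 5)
Your proposal is correct and follows essentially the same route as the paper: decompose via the intermediate pair $\left(\mathcal{U}^{\epsilon},\mathcal{V}^{\epsilon}\right)$ using $\left(a+b\right)^{2}\le2\left(a^{2}+b^{2}\right)$, bound the first part by Theorem \ref{thm:4}, the second by Theorem \ref{thm:3} together with $\left(\mbox{A}_{1}\right)$, and substitute $\beta=\epsilon^{m}$. Your extra remark that $\beta=\epsilon^{m}$ must be small enough for the auxiliary smallness condition on $\beta$ is a fair point the paper leaves implicit, but it does not change the argument.
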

\begin{proof}
The proof is straightforward by Theorem \ref{thm:est2}-\ref{thm:4}
. Indeed, choosing $\beta=\varepsilon^{m},m\in\left(0,1\right]$
we apply the triangle inequality and the elementary inequality $(a+b)^2\le 2\left(a^2 + b^2\right)$ to obtain
\begin{eqnarray*}
\left\Vert U\left(x,\cdot\right)-U^{\varepsilon}\left(x,\cdot\right)\right\Vert ^{2}_{\mathcal{H}}
 & \le & 2\left(\left\Vert U\left(x,\cdot\right)-\mathcal{U}^{\varepsilon}\left(x,\cdot\right)\right\Vert ^{2}_{\mathcal{H}}
 +\left\Vert \mathcal{U}^{\varepsilon}\left(x,\cdot\right)-U^{\varepsilon}\left(x,\cdot\right)\right\Vert ^{2}_{\mathcal{H}}\right)
\\
 & \le & C_{a}\left(ka\right)^{\frac{2kx}{a}}\left(\varepsilon^{2m\left(1-\frac{x}{a}\right)}+\varepsilon^{2\left(1-\frac{mx}{a}\right)}\right)\left(\ln\left(\frac{a^{k}}{k\varepsilon^{m}}\right)\right)^{-\frac{2kx}{a}}.
\end{eqnarray*}

This ends the proof of the theorem.
\end{proof}

\begin{rem}
Even though the well-posedness and convergence rate have been made, some remarks should be singled out:
\begin{enumerate}
\item Theorem \ref{thm:est2} shows a prototype to estimate the error between $U$ and $U^{\varepsilon}$ under Gevrey-type assumptions on the exact solution. The mild restriction \eqref{cond:gevrey} is flexible and can be changed properly in some certain cases. For instance, if $\lambda_{1}\ge 1$ it allows us to extend this condition by setting $s_{1}\ge k$ and $s_{2}\ge k-1$. It also serves several analytic functions if we choose $k=1$;

\item The proposed method is shown
to be good convergence in comparison with well-known methods, such
as quasi-reversibility method and quasi-boundary value method. In
fact, one of the superficial advantages is described by the error estimate
at $x=a$. Particularly, it is valid in the logarithmic type, i.e.
\[
\left\Vert U\left(a,\cdot\right)-U^{\varepsilon}\left(a,\cdot\right)\right\Vert ^{2}_{\mathcal{H}}
\le C_{a}\left(ka\right)^{2k}\left(1+\varepsilon^{2\left(1-m\right)}\right)\left(\ln\left(\frac{a^{k}}{k\varepsilon^{m}}\right)\right)^{-2k}\le C_{a}\left(\ln\left(\frac{a^{k}}{k\varepsilon^{m}}\right)\right)^{-2k},\quad m\in\left(0,1\right],
\]
which cannot happen in the study of quasi-reversibility method (\cite{LL67}).

Moreover, the convergence rate is quite general. In principle, it
is of the order $\mathcal{O}\left(\varepsilon^{m}\left(\ln\left(\frac{a^{k}}{k\varepsilon^{m}}\right)\right)^{-\frac{kx}{a}}\right)$
and this is a generalization of many previous results, \emph{e.g.}
\cite{TTTK15,TTK15}.
\end{enumerate}
\end{rem}

\section{A computational tool}
\label{sec:compute}
\subsection{Preliminaries}
This part of work concentrates on a tool of implementation for the stable approximate solution \eqref{eq:uepmild}. Due to the orthonormal basis $\left\{\phi_n\right\}_{n\in\mathbb{N}}$, we focus here on computing the real-valued Fourier coefficient, denoted by $U_{n}^{\varepsilon}$, at each frequency $n\in\mathbb{N}$, which reads
\begin{eqnarray}
\left\langle u_{i}^{\varepsilon}\left(x,\cdot\right),\phi_{n}^{i} \right\rangle
& = & \left(\Psi_{n,k}^{i,\beta}\left(x\right)+\frac{e^{-\sqrt{\alpha_{i}\lambda_{n}^{i}}x}}{2}\right)\left\langle u_{0}^{i,\varepsilon},\phi_{n}^{i}\right\rangle +\frac{1}{\sqrt{\alpha_{i}\lambda_{n}^{i}}}\left(\Psi_{n,k}^{i,\beta}\left(x\right)-\frac{e^{-\sqrt{\alpha_{i}\lambda_{n}^{i}}x}}{2}\right)\left\langle u_{1}^{i,\varepsilon},\phi_{n}^{i}\right\rangle 
\nonumber \\
&  & +\int_{0}^{x}\frac{1}{\sqrt{\alpha_{i}\lambda_{n}^{i}}}\left(\Psi_{n,k}^{i,\beta}\left(x-\xi\right)-\frac{e^{-\sqrt{\alpha_{i}\lambda_{n}^{i}}\left(x-\xi\right)}}{2}\right)\left\langle \mathcal{F}_{i}\left(\xi,u_{1}^{\varepsilon},u_{2}^{\varepsilon}\right),\phi_{n}^{i}\right\rangle d\xi,\quad i \in\left\{1,2\right\}.\label{eq:uepmild-inner}
\end{eqnarray}

Even though the stability in $\mathcal{H}$ of the solution has been proved, a lot of issues concerning numerical approximations remain questioned. The fact is that the inner product is one of the obstacles since it is viewed as the highly oscillatory integral. Nowadays, many efforts try to tackle the very general integral with the kernel expressed in the form of an
imaginary exponential function, i.e.
\begin{equation}
I[f]=\int_{\Omega}f(x)e^{i\omega g(x)}dx,\label{eq:newint}
\end{equation}
where $f$ and $g$ are non-oscillatory functions and $\omega$ the large number represents the frequency of oscillations. The consideration of this integral is obvious because by taking the real and imaginary parts, it reveals
two integrals with trigonometric kernels which occur very often in practice. In the present paper, we, however, study the simple case. In particular, for an open parallelepiped $\Omega = (0,a_{1})\times...\times(0,a_{d})\subset\mathbb{R}^{d}$, we recall the Sturm-Liouville problem
\[
-\Delta \phi = \lambda\phi,
\]
associated with the zero Neumann boundary conditions, and it gives us that
\[
\phi_{n} = \prod_{j=1}^{d}\sqrt{\frac{2}{a_j}}\cos\left(\frac{\pi n_j}{a_j}x_{j}\right),
\quad
\lambda_{n}=\sum_{j=1}^{d}\left(\frac{\pi n_j}{a_j}\right)^2,\quad n_{j}\in\mathbb{N},j\in\left\{1,...,d\right\}.
\]

Despite the general form, we herein consider $d=1$, and \eqref{eq:newint} thus reduces to the one-dimensional case with the cosine kernel:
\begin{equation}
I[f]=\int_{0}^{b}f(y)\cos\left(\frac{\pi n}{b}y\right)dy,\label{eq:intconsider}
\end{equation}
which implies the standard Fourier oscillator with $g(y)=y$ and $\omega = \pi n/b$.

Concerning the motivation of studying such types of integrals, the reader can be referred to \cite{Olv08}. We also remind that the non-oscillator function $f$ in the case being tackled \eqref{eq:intconsider} is drastically affected by the noise level $\varepsilon$. On the other hand, by measurement only discrete values of such input are known, and the measured data $\mbox{W}^{\varepsilon}$ can be expressed from the exact data $\mbox{W}$, as follows:
\begin{equation}
\mbox{W}^{\varepsilon}\left(y_{j}\right)=\mbox{W}\left(y_{j}\right)+\varepsilon\mbox{rand}\left(y_{j}\right),\label{eq:noisedata}
\end{equation}
where $\mbox{rand}(\cdot)$ performs the random generator and shall be chosen properly to suit the assumption $\left(\mbox{A}_{1}\right)$.
Until now, a vast number of approaches attempted to deal with the highly oscillatory integrals. In this work, we are interested in the use of the asymptotic expansion and the Filon-type methods.

\subsection{The asymptotic expansions and the Filon-type methods}\label{subsec:filon}
The asymptotic expansion is a simply direct approach which is used for the stationary-free functions $g$ (i.e. $g'\ne 0$). By integration by parts, \eqref{eq:intconsider} in fact becomes
\begin{equation}
I[f]=\frac{1}{\omega}f(b)\sin(\omega b)
- \frac{1}{\omega}\int_{0}^{b}f'(y)\sin(\omega y)dy. \label{eq:I1}
\end{equation}

We, however, stress that the asymptotic expansions do not, in general, converge when the frequency is fixed. With the absolute accuracy $\mathcal{O}(\omega^{-2})$ (cf. \cite{Olv08}), this approach shall be highly useful when $n$ is huge since it reduces the computational costs a lot. We may point out the limitation by choosing $b=\pi$. The first term on the right-hand side of \eqref{eq:I1} vanishes immediately no matter what $f$ is. On the other side, continuing to choose $f(y)=y$ and $n=3$ the exact value of $I[f]$ yields $-2/9\approx -0.2222$ whilst the approximation by the expansion is definitely $0$. This comparison also agrees with the aforementioned error.

While the asymptotic expansions merely do a good approximation for an extremely large level of oscillations, the Filon-type method in which we are interested next, extending the work of Filon \cite{Filon} and investigated by Iserles et al. \cite{IN05}, works well. For the sake of clarity, we shall re-introduce shortly all computational procedures of this method in our context.

Let $\left\{y_{l}\right\}_{0}^{\nu}$ be a set of arbitrary node points such that
\begin{equation}
0=y_{0}<y_{1}<...<y_{\nu}=b,\label{eq:point1}
\end{equation}
and let $\left\{m_{l}\right\}_{0}^{\nu}$ be a set of multiplicities associated with those prescribed points.
We shall construct a polynomial $p$:
\begin{equation}
p(y) = \sum_{q=0}^{\ell}c_{q}y^{q}
= c_{0}+c_{1}y+...+c_{\ell}y^{\ell},\label{eq:qori}
\end{equation}
whose degree is defined by
\[
\ell:=\sum_{l=0}^{\nu}m_{l}-1.
\]

Finding the function $p$ in \eqref{eq:qori} is not problematic. Indeed, the need is to determine the constant $c_{q}$ and by using the nodes $y_{l}$, we arrive at the following system:
\begin{equation}
\begin{cases}
p\left(y_{l}\right)=f\left(y_{l}\right),\\
p'\left(y_{l}\right)=f'\left(y_{l}\right),\\
\vdots\\
p^{\left(m_{l}-1\right)}\left(y_{l}\right)=f^{\left(m_{l}-1\right)}\left(y_{l}\right),
\end{cases}
\label{eq:systemfl1}
\end{equation}
for every $0\le l\le \nu$.
Then, we obtain a good approximation of \eqref{eq:intconsider} with asymptotic order $s$ in the sense that
\begin{equation}
I[f] - \sum_{q=0}^{\ell}c_{q}I[y^{q}] \sim \mathcal{O}(\omega^{-s-1}),\label{eq:filonloai1}
\end{equation}
where $s:=\min\left\{m_{0},m_{\nu}\right\}$.

This method, however, is not really handy when the structure of $f$ is complicated. One can apply it to compute the two inner products $\left\langle u_{0}^{i,\varepsilon},\phi_{n}^{i}\right\rangle$ and $\left\langle u_{1}^{i,\varepsilon},\phi_{n}^{i}\right\rangle$ in \eqref{eq:uepmild-inner} whilst the term $\left\langle \mathcal{F}_{i}\left(\xi,u_{1}^{\varepsilon},u_{2}^{\varepsilon}\right),\phi_{n}^{i}\right\rangle$ is still troublesome. It is simply because that the problem \eqref{eq:uepmild-inner} must be solved numerically by an iteration-like method. For each loop of iteration, we must follow the former loops by plugging it into the new loop, which basically escalates the complexity of computations. We also remark from the Filon-type method above that the more the accuracy is obtained, the more derivatives we have to compute. Therefore, we are led to the derivative-free method or the adaptive Filon-type method, postulated by Iserles and N\o{}rsett in \cite{IN04}, which allows us to gain high asymptotic orders without any attention to derivatives.

The notion of this method is that we intentionally choose the interpolation nodes which depends on the frequency $\omega$ in place of the arbitrary points as \eqref{eq:point1}. In particular, the derivatives $f^{(r)}\left(y_{l}\right)$ for $r\in \left\{0,1,...,m_{l}-1\right\}$ are substituted by just the values $f\left(y_{l}+\gamma_{l,j}/\omega\right)$ for $j\in \left\{0,1,...,m_{l}-1\right\}$. Henceforward, for each $0\le l\le \nu$ the system \eqref{eq:systemfl1} becomes:
\begin{itemize}
\item For $l=0$:
\[
\begin{cases}
p\left(y_{0}\right)=f\left(y_0\right),\\
p'\left(y_{0}\right)=\left|h_{0}^{\omega}\right|^{-1}\Delta_{h_{0}^{\omega}}^{1}[f]\left(y_0\right),\\
\vdots\\
p^{\left(m_{0}-1\right)}\left(y_{0}\right)=\left|h_{0}^{\omega}\right|^{-m_0+1}\Delta_{h_{0}^{\omega}}^{m_0-1}[f]\left(y_0\right),
\end{cases}
\]
\item For $l\in\left\{1,...,\nu-1\right\}$:
\[
\begin{cases}
p\left(y_{l}\right)=f\left(y_{l}\right),\\
p'\left(y_{l}\right)=\left|h_{l}^{\omega}\right|^{-1}\delta_{h_{l}^{\omega}}^{1}[f](y_{l}),\\
\vdots\\
p^{\left(m_{l}-1\right)}\left(y_{l}\right)=\left|h_{l}^{\omega}\right|^{-m_l+1}\delta_{h_{l}^{\omega}}^{m_l-1}[f](y_{l}),
\end{cases}
\]
\item For $l=\nu$:
\[
\begin{cases}
p\left(y_{\nu}\right)=f\left(y_{\nu}\right),\\
p'\left(y_{\nu}\right)=\left|h_{\nu}^{\omega}\right|^{-1}\nabla_{h_{\nu}^{\omega}}^{1}[f]\left(y_{\nu}\right),\\
\vdots\\
p^{\left(m_{l}-1\right)}\left(y_{\nu}\right)=\left|h_{\nu}^{\omega}\right|^{-m_{\nu}+1}\nabla_{h_{\nu}^{\omega}}^{m_{\nu}-1}[f]\left(y_{\nu}\right),
\end{cases}
\]
\end{itemize}
where $\Delta^{j}_{h_{0}^{\omega}}$, $\delta^{j}_{h_{l}^{\omega}}$ and $\nabla^{j}_{h_{\nu}^{\omega}}$, $j\in\left\{1,...,m_l-1\right\}$ denote the high-order forward, central and backward finite difference, respectively, and are defined by
\[
\Delta^{j}_{h_{0}^{\omega}}[f](z):=
\sum_{i=0}^{j}\left(-1\right)^{i}\left(\begin{array}{c}
j\\
i
\end{array}\right)f\left(z+\left(j-i\right)h_{0}^{\omega}\right),
\]
\[
\delta^{j}_{h_{l}^{\omega}}[f](z):=
\sum_{i=0}^{j}(-1)^{i}\left(\begin{array}{c}
j\\
i
\end{array}\right)f\left(z+\left(\frac{j}{2}-i\right)h_{l}^{\omega}\right),
\]
\[
\nabla_{h_{\nu}^{\omega}}^{j}[f]\left(z\right):=\sum_{i=0}^{j}\left(-1\right)^{i}\left(\begin{array}{c}
j\\
i
\end{array}\right)f\left(z-ih_{\nu}^{\omega}\right),\quad h_{l}^{\omega}:= \frac{\gamma_{l}}{\omega},
\]
with $\gamma_{l}>0$, $l\in\left\{0,1,...,\nu\right\}$ being free-to-choose sufficiently small constants and in addition, the ``smallness" of $\gamma_{l}$ must agree with
\[
y_0 + (m_0-1)h_{0}^{\omega}<y_{1}-(m_1-1)\frac{h_{1}^{\omega}}{2}<y_1+(m_1-1)\frac{h_{1}^{\omega}}{2}<
y_{2}-(m_2-1)\frac{h_{2}^{\omega}}{2}<...<y_{\nu-1}+(m_{\nu-1}-1)\frac{h_{\nu-1}^{\omega}}{2}< y_{\nu} - (m_{\nu}-1)h_{\nu}^{\omega},
\]
which ensures the order of interpolation. (see Figure \ref{fig:1} for the corresponding illustration)
\begin{figure}[H]
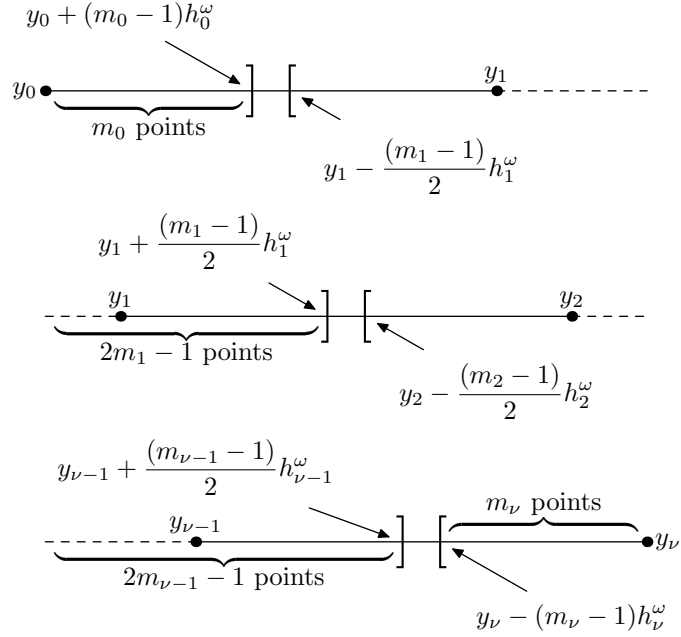

\centering
\parbox{10cm}{\convertMPtoPDF{fig.1.eps}{1}{1}}
\caption{Illustration of interpolation points by the adaptive Filon-type method.}
\label{fig:1}
\end{figure}

\begin{rem}
In \eqref{eq:filonloai1}, the moments
\[
I\left[y^{q}\right]=\int_{0}^{b}y^q\cos(\omega y)dy
\]
can be computed explicitly for each $q$ and $\omega$.
The error controls of the above Filon-type methods are the same. Additionally, it is worth noting that the construction of the Filon-type methods herein is based on the Hermite interpolation. This means that the error control strongly depends on the maximum norm of the derivative $f^{(\ell+1)}$. Once this norm becomes larger after increasing the nodes, the asymptotic expansions are better to use. Some simple examples are $f(y)=\cos(100y)$ and $f(y)=e^{10y}$. 
\end{rem}

From now on, it is possible to deduce our computational strategy in the next section:
\begin{itemize}
\item If the frequency of oscillations $\omega$ is exceedingly large, we can examine the approximation with the asymptotic expansions since it reduces the computational cost;
\item If the frequency is not very large, the adaptive Filon-method is available for all the inner products $\left\langle u_{0}^{i,\varepsilon},\phi_{n}^{i}\right\rangle$, $\left\langle u_{1}^{i,\varepsilon},\phi_{n}^{i}\right\rangle$ and $\left\langle \mathcal{F}_{i}\left(\xi,u_{1}^{\varepsilon},u_{2}^{\varepsilon}\right),\phi_{n}^{i}\right\rangle$ in \eqref{eq:uepmild-inner}.
\end{itemize}

\begin{exm}
Our selection for testing with the Filon-type method is the function
\[
f\left(y\right)=2\mbox{sech}\left(y\right)+\frac{y^{2}}{b}\mbox{sech}\left(b\right)\tanh\left(b\right).
\]

Choosing $b=1$ and the following nodes and multiplicities:
\[
y_{l}\in\left\{0,\frac{1}{2},1\right\},\quad 
m_{l}\in\left\{2,1,2\right\},
\]
which also lead to $\nu = 2$ and $\ell = 4$, the system \eqref{eq:systemfl1} becomes
\[
\begin{cases}
c_{0}=u_{1}^{1}\left(0\right),\\
c_{1}=\frac{du_{1}^{1}}{dy}\left(0\right),\\
\frac{c_{2}}{4}+\frac{c_{3}}{8}+\frac{c_{4}}{16}=u_{1}^{1}\left(\frac{1}{2}\right)-u_{1}^{1}\left(0\right)-\frac{1}{2}\frac{du_{1}^{1}}{dy}\left(0\right),\\
c_{2}+c_{3}+c_{4}=u_{1}^{1}\left(1\right)-u_{1}^{1}\left(0\right)-\frac{du_{1}^{1}}{dy}\left(0\right),\\
2c_{2}+3c_{3}+4c_{4}=\frac{du_{1}^{1}}{dy}\left(1\right)-\frac{du_{1}^{1}}{dy}\left(0\right).
\end{cases}
\]

Thus, we compute that
\[
c_0 = 2, c_{1} = 0, c_{2} = -0.5965, c_{3} = 0.3518, c_{4} = 0.0344,
\]
and with $n=1$, we find that
\[
I\left[y^{0}\right]=0, I\left[y^{1}\right] = I\left[y^{2}\right] = \frac{-2}{\pi^2},
I\left[y^{3}\right]= \frac{-3(\pi^2-4)}{\pi^4}, I\left[y^{4}\right] = \frac{-4(\pi^2 - 6)}{\pi^4}.   
\]

We then obtain the good approximation is 0.0518 whilst using the Gauss-Legendre quadrature method with 10 nodes, the approximation is 0.0524. Comparing these approximations to the round-off exact value which yields 0.0524, the Gauss-Legendre wins. However, by doing the same token with $n=10$, we compute that the approximation by the Filon-type method is $-8.4755\times 10^{-7}$ while it is terribly 0.0475, compared to the exact value $-1.5369\times 10^{-6}$, with the 10-nodes Gauss-Legendre method.

Like many quadrature methods, the more the number of nodes, the more accuracy we obtain. To verify it, we choose
\[
y_{l}\in\left\{0,\frac{1}{8},\frac{1}{4},\frac{3}{8},\frac{1}{2},\frac{5}{8},\frac{3}{4},\frac{7}{8},1\right\},
\quad
m_{l}\in\left\{2,1,1,1,1,1,1,1,2\right\},
\]
then with $n=1$ the approximation tells 0.0524. With $n=10$, it is $-1.5371\times 10^{-6}$ whereas the Gauss-Legendre requires at least 15 nodes to gain the same value.

In addition, we continue to test with the measured data, and recall that such data is defined in \eqref{eq:noisedata} where $|\mbox{rand}(y)|\le (2b)^{-1/2}$ is assumed for all $y\in [0,b]$. Notice herein that we cannot compute directly the exact inner product along with $u_{1}^{1,\varepsilon}$ due to the rand function. Nevertheless, the exact values can be used again for comparison. It is simply because that when the noise level $\varepsilon$ goes very small, the approximation must approach the exact value. We also remark that when differentiating \eqref{eq:noisedata}, the rand function does not vanish itself, and the noise (along with other random generators) as a result still appears in all the derivatives in the system \eqref{eq:systemfl1}. On the whole, with $\varepsilon = 10^{-6}$, the obtained values are 0.0524 and $-1.6455\times 10^{-6}$ for $n=1$ and $n=10$, respectively. For $n=10$, if carry on with $\varepsilon = 10^{-8}$, we get the better approximation $-1.5378\times 10^{-6}$ which agrees with our expectation.
\begin{table}
\begin{centering}
\begin{tabular}{|c|c|c|c|}
\hline 
$n$ & 1 & 5 & 10\tabularnewline
\hline 
$\varepsilon=10^{-4}$ & 2.05$\times10^{-2}$\% & 1.31\% & 4.15\%\tabularnewline
\hline 
$\varepsilon=10^{-6}$ & 2.23$\times10^{-4}$\% & 2.44$\times10^{-2}$\% & 5.52$\times10^{-2}$\%\tabularnewline
\hline 
$\varepsilon=10^{-8}$ & 6.30$\times10^{-5}$\% & 3.49$\times10^{-4}$\% & 2.18$\times10^{-4}$\%\tabularnewline
\hline 
$\varepsilon=0$ & 6.17$\times10^{-5}$\% & 1.11$\times10^{-4}$\% & 1.27$\times10^{-4}$\%\tabularnewline
\hline 
\end{tabular}
\par\end{centering}
\caption{The relative accuracy of the Filon-type method with and without the presence of the noise level.}
\label{tab:1}
\end{table}

\begin{figure}
	\centering
	\begin{subfigure}[!h]{.5\linewidth}
		\centering
		\includegraphics[scale=0.6]{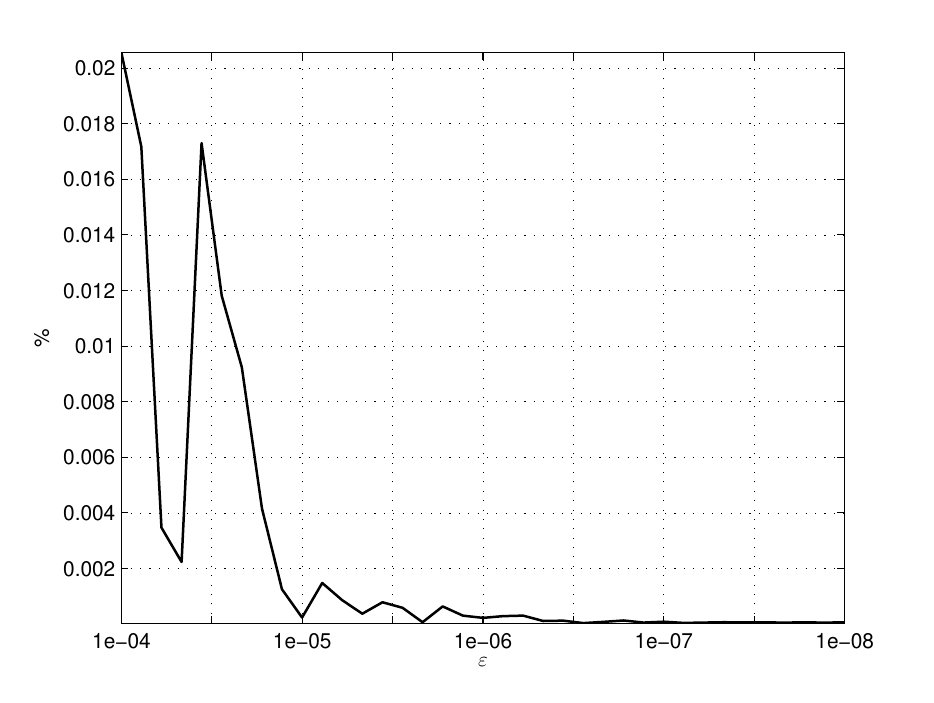}
		\caption{$n=1$}
	\end{subfigure}%
	\begin{subfigure}[!h]{.5\linewidth}
		\centering
		\includegraphics[scale=0.6]{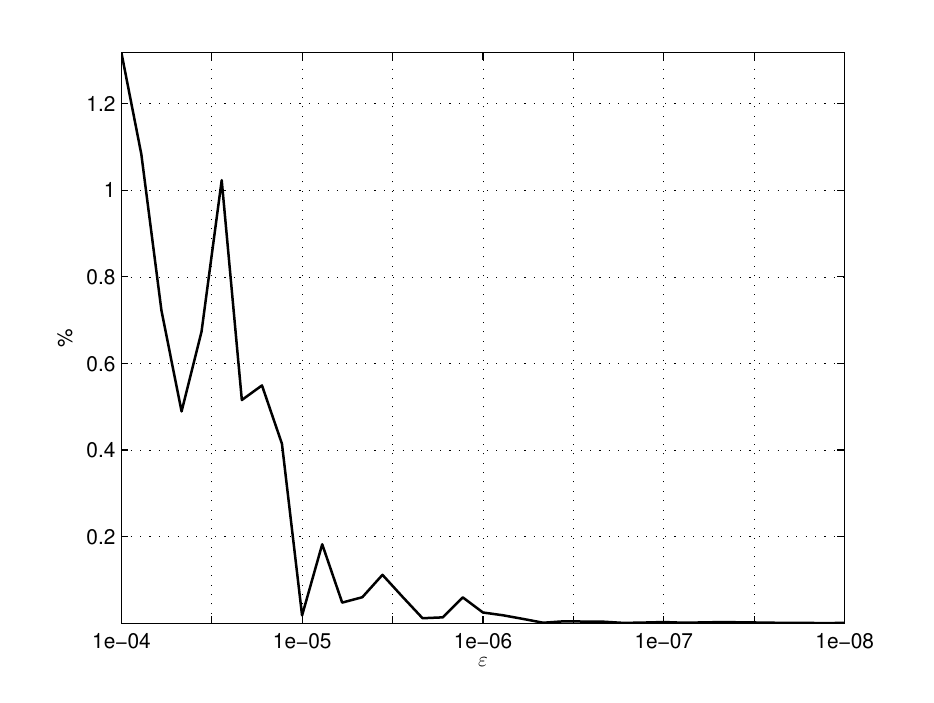}
		\caption{$n=5$}
	\end{subfigure}
	\begin{subfigure}[!h]{.5\linewidth}
		\centering
		\includegraphics[scale=0.6]{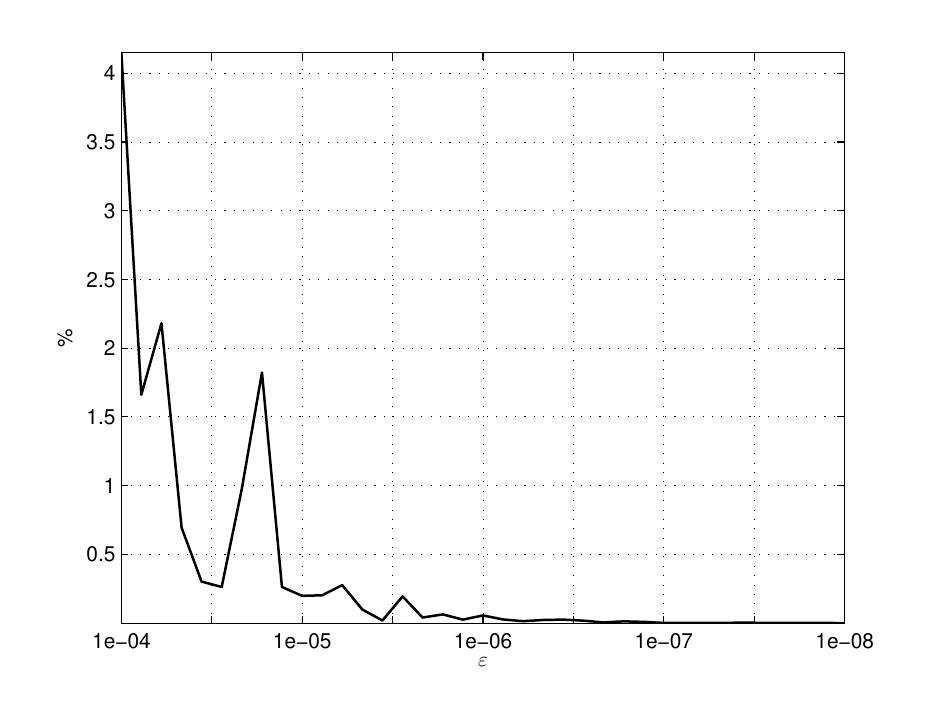}
		\caption{$n=10$}
	\end{subfigure}
	\caption{Relative errors for the approximation by the Filon-type method at $n\in\left\{1,5,10\right\}$ with various noise levels $\varepsilon$.}\label{fig:2}
\end{figure}

Owing to the decay of the oscillatory integrals (cf. \cite{Stein93}), we use the relative error in lieu of the absolute error to perceive the convergence. These errors are tabulated in Table \ref{tab:1} and illustrated in Figure \ref{fig:2}, and these at once reveal the impact of the noise on the number of frequencies. In simpler terms, at each noise level the bigger the frequency, the less the accuracy. As a consequence, the need of truncation is evident to control the practical error bound. This also facilitates our Gevrey-type priori condition, and that the convergence is thus valid.

As we may readily expect, when setting the finite-difference operator on the derivatives to deduce the adaptive Filon-type method, we must pay a price. Obviously, the price comes from those approximations. Particularly, they are of the orders $\mathcal{O}(h_0^{\omega}),\mathcal{O}(h_{\nu}^{\omega})$, respectively, for the forward and backward differences while the last operator gives the order $\mathcal{O}(\left|h_l^{\omega}\right|^2)$. Due to the presence of the noise, the ``smallness" of the quantities $\gamma_{l}$ can be computed. In fact, taking $\gamma_{l}=\varepsilon^{d}$, $d>0$ is possible. As tabulated in Table \ref{tab:1-1}, the performance of the adaptive Filon-type method under the choice $\gamma_{l}=\varepsilon$ ($d=1$) is clearly better than the results along with $\gamma_l = 10^{-4}$ for all $l\in\left\{0,1,...,\nu\right\}$ as $\varepsilon\to 0$. On top of that they are somehow better than the numerical results in Table $\ref{tab:1}$ at some noise level.

\begin{table}
\begin{centering}
\begin{tabular}{|c|c|c|c|c|c|c|}
\hline 
\multirow{2}{*}{$n$} & \multicolumn{2}{c|}{1} & \multicolumn{2}{c|}{5} & \multicolumn{2}{c|}{10}\tabularnewline
\cline{2-7} 
 & $\gamma_{l}=\varepsilon$ & $\gamma_{l}=10^{-4}$ & $\gamma_{l}=\varepsilon$ & $\gamma_{l}=10^{-4}$ & $\gamma_{l}=\varepsilon$ & $\gamma_{l}=10^{-4}$\tabularnewline
\hline 
$\varepsilon=10^{-4}$ & 2.80$\times10^{-2}$\% & 2.24$\times10^{-2}$\% & 4.19\% & 2.40\% & 5.65\% & 4.05\%\tabularnewline
\hline 
$\varepsilon=10^{-6}$ & 2.23$\times10^{-4}$\% & 2.48$\times10^{-4}$\% & 6.93$\times10^{-3}$\% & 6.32$\times10^{-3}$\% & 6.84$\times10^{-3}$\% & 2.84$\times10^{-2}$\%\tabularnewline
\hline 
$\varepsilon=10^{-8}$ & 6.22$\times10^{-5}$\% & 1.24$\times10^{-4}$\% & 3.79$\times10^{-4}$\% & 1.50$\times10^{-3}$\% & 1.32$\times10^{-4}$\% & 3.46$\times10^{-4}$\%\tabularnewline
\hline 
\end{tabular}
\par\end{centering}

\caption{Relative error comparison between the choices of $\gamma_{l}$ for
the adaptive Filon-type method with various amounts of noise levels. }
\label{tab:1-1}
\end{table}
\end{exm}

\subsection{The Picard-based iteration}\label{subsec:Picard}
We herein use the Picard-like procedure to approximate the Volterra-type integral equation \eqref{eq:uepmild}. We begin with a  mesh-grid $h_{j}^{x}=x_{j+1}-x_{j}$ for $j=\overline{0,J-1}$ and denote $h_{x}=\max_{0\le j\le J-1}\left|h_{j}^{x}\right|$. Then for each $x_{j}$ we compute $U^{\varepsilon,j+1}(y)\approx U^{\varepsilon}\left(x_{j+1},y\right)$ by the following iteration:
\begin{eqnarray}
\left\langle u_{i}^{\varepsilon,j+1},\phi_{n}^{i}\right\rangle
& = & \left(\Psi_{n,k}^{i,\beta}\left(x_{j+1}\right)+\frac{e^{-\sqrt{\alpha_{i}\lambda_{n}^{i}}x_{j+1}}}{2}\right)\left\langle u_{0}^{i,\varepsilon},\phi_{n}^{i}\right\rangle +\frac{1}{\sqrt{\alpha_{i}\lambda_{n}^{i}}}\left(\Psi_{n,k}^{i,\beta}\left(x_{j+1}\right)-\frac{e^{-\sqrt{\alpha_{i}\lambda_{n}^{i}}x_{j+1}}}{2}\right)\left\langle u_{1}^{i,\varepsilon},\phi_{n}^{i}\right\rangle
\nonumber \\
 &  & +\sum_{0\le r\le j}\int_{x_{r}}^{x_{r+1}}\frac{1}{\sqrt{\alpha_{i}\lambda_{n}^{i}}}\left(\Psi_{n,k}^{i,\beta}\left(x_{j+1}-\xi\right)-\frac{e^{-\sqrt{\alpha_{i}\lambda_{n}^{i}}\left(x_{j+1}-\xi\right)}}{2}\right)\left\langle \mathcal{F}_{i}\left(\xi,u_{1}^{\varepsilon,r},u_{2}^{\varepsilon,r}\right),\phi_{n}^{i}\right\rangle d\xi,
 \label{eq:uepmild-inner11}
\end{eqnarray}
with the starting point $u_{i}^{\varepsilon,0}=u_{i}^{\varepsilon}(x_0)\equiv u_{0}^{i,\varepsilon}$.

Note that the notation $\mathcal{F}_{i}\left(\xi,u_{1}^{\varepsilon,r},u_{2}^{\varepsilon,r}\right)$ denotes
\[
\mathcal{F}_{i}\left(\xi,u_{1}^{\varepsilon,r},u_{2}^{\varepsilon,r}\right) = 
f_{i}(\xi)-
\gamma_{i}\sin\left(\delta_{i1}u_{1}^{\varepsilon,r}+\delta_{i2}u_{2}^{\varepsilon,r}\right)
-\sigma_{i1}u_{1}^{\varepsilon,r}-\sigma_{i2}u_{2}^{\varepsilon,r},\quad i\in\left\{1,2\right\},
\]
slightly different from the usual notation we used above (for example, in \eqref{eq:uepmild}).

Normally, this iteration is linearly convergent when $h_{j}^{x}$ is sufficiently small, and here it is not an exception. Due to the boundedness of the kernel from Remark \ref{rem:bou} which increases by the reduction of $\varepsilon$,  a restriction for $h_{j}^{x}$ is required. More precisely, the choice of the discretization $h_{j}^{x}$ shall strictly depend on $\beta$ and obviously adhere to the boundedness of the kernel in Lemma \ref{lem:boundedness} and the Lipschitz constants acting on $\mathcal{F}_{i}$. In particular, the mild restriction for each loop $j$ can be designed by
\begin{equation}
\sum_{i\in\left\{ 1,2\right\} }\frac{1}{\sqrt{\alpha_{i}\lambda_{1}^{i}}}\sum_{i\in\left\{ 1,2\right\} }\left(\left|\gamma_{i}\right|+\left|\sigma_{i1}\right|+\left|\sigma_{i2}\right|\right)\left(ka\right)^{\frac{kx_{j+1}}{a}}\beta^{-\frac{x_{j+1}}{a}}\left(\ln\left(\frac{a^{k}}{k\beta}\right)\right)^{-\frac{kx_{j+1}}{a}}\le \left(h_{j}^{x}\right)^{-r},\quad r\in (0,1).
\label{eq:newres}
\end{equation}

\section{A numerical test}
\label{sec:test}

In this section, we test the proposed modified method in Section \ref{sec:main} as well as the computational procedures in Section \ref{sec:compute} on an example. The exact solutions of this example, denoted by $u_{i}^{ex}$, are supposed to be known. Thereby, they are given by
\[
u_{1}^{ex}(x,y)=C_{1}\left(\frac{1}{7}e^{\frac{x}{a}}\cos(7x)-\frac{1}{5}e^{-(x-a)^2}\cos(5x)\right)\cos\left(\frac{3\pi y}{b}\right),
\]
\[
u_{2}^{ex}(x,y)=C_{2}x^{2}\cos\left(\frac{\pi y}{b}\right)+C_{2}x\cos\left(\frac{2\pi y}{b}\right).
\]

In this case, we consider the physical parameters:
\[
\alpha_{1}=\alpha_{2}=\delta_{11}=\delta_{22}=1,\quad 
\gamma_{1}=\gamma_{2}=-1,
\]
\[
\delta_{12}=\delta_{21}=\sigma_{11}=\sigma_{22}=0,\quad \sigma_{12}=\sigma_{21}=\frac{1}{2},
\]
and the forcing terms:
\begin{eqnarray*}
f_{1}\left(x,y\right) & = & \frac{C_{1}}{a}e^{\frac{x}{a}}\left(\left(\frac{1}{7a}-7a\right)\cos\left(7x\right)-2\sin\left(7x\right)\right)\cos\left(\frac{3\pi y}{b}\right)\\
 &  & +C_{1}e^{-\left(x-a\right)^{2}}\left(\frac{1}{5}\left(27 - 4\left(x-a\right)^{2}\right)\cos\left(5x\right)-4\left(x-a\right)\sin\left(5x\right)\right)\cos\left(\frac{3\pi y}{b}\right)-\frac{9\pi^{2}}{b^{2}}u_{1}^{ex}-\sin\left(u_{1}^{ex}\right)+\frac{1}{2}u_{2}^{ex},
\end{eqnarray*}
\[
f_{2}\left(x,y\right)=C_{2}\left(2-\frac{\pi^{2}x^{2}}{b^{2}}\right)\cos\left(\frac{\pi y}{b}\right)-C_{2}\frac{4\pi^{2}x}{b^{2}}\cos\left(\frac{2\pi y}{b}\right)-\sin\left(u_{2}^{ex}\right)+\frac{1}{2}u_{1}^{ex}.
\]

In addition, the initial conditions \eqref{eq:initial} are expressed by
\begin{equation}
u_{0}^{1}\left(y\right)=C_{1}\left(\frac{1}{7}-\frac{1}{5}e^{-a^2}\right)\cos\left(\frac{3\pi y}{b}\right),\label{eq:ini1}
\end{equation}
\begin{equation}
u_{1}^{1}\left(y\right)=C_{1}\left(\frac{1}{7a}-\frac{2a}{5}e^{-a^2}\right)\cos\left(\frac{3\pi y}{b}\right),\label{eq:ini2}
\end{equation}
\begin{equation}
u_{0}^{2}\left(y\right)=0,\quad u_{1}^{2}\left(y\right)=C_{2}\cos\left(\frac{2\pi y}{b}\right).\label{eq:ini3}
\end{equation}

We return to the issue of the influence of high frequencies. Many model problems  in the literature rely on the truncation approach, including the Cauchy problem for the Helmholtz equation  postulated by Regi\' nska and Regi\' nski \cite{RR06}, and the classical nonlinear elliptic equation taken by Tuan et al. \cite{TTKT15}. Employing such results, our computations only need to compute up to a selected cut-off constant, which reads
\begin{equation}
n \le \vartheta\ln\left(\frac{1}{\varepsilon}\right),\quad \vartheta\in (0,1),\label{eq:truncate}
\end{equation}

As an example, we choose $\vartheta = 0.5$, then tabulate in Table \ref{tab:2} several largest admissible numbers $n$ for various noise levels.
\begin{table}
\begin{centering}
\begin{tabular}{|c|c|c|c|c|c|c|c|}
\hline 
$\varepsilon$ & $10^{-2}$ & $10^{-3}$ & $10^{-4}$ & $10^{-5}$ & $10^{-6}$ & $10^{-7}$ & $10^{-8}$\tabularnewline
\hline 
$n$ & 2 & 3 & 4 & 5 & 6 & 8 & 9\tabularnewline
\hline 
\end{tabular}
\par\end{centering}
\caption{The largest admissible numbers $n$ for various amounts of noise.}
\label{tab:2}
\end{table}

At present, we are in a position to present our numerical process. In particular, we shall solve the model problem \eqref{eq:1.2}-\eqref{eq:initial} in the unit square ($a=b=1$) with $C_1=C_2=2$. At the discretization level for this problem, a grid of mesh-points $(x_j,y_l)$ is taken properly. The mesh-grid in $x$ is introduced in Subsection \ref{subsec:Picard} while  in order to utilize the adaptive Filon-type method recently commenced in Subsection \ref{subsec:filon}, we accordingly choose the equivalent mesh-width in $y$, i.e. $h_{l}^{y}=y_{l+1}-y_{l}=b/\nu$ for $l\in\left\{0,1,...,\nu\right\}$. It is worth noting that we will not solve with only  $\nu+1$ points in $y$, the needed number is more than that. It, in general, depends on the multiplicities $\left\{m_l\right\}_{0}^{\nu}$ and that we thus denote this needed set by $\left\{y_l\right\}_{0}^{L}\supset \left\{y_l\right\}_{0}^{\nu}$ without specifying the details. Additionally, the cut-off constant, denoted by $N$, will follow \eqref{eq:truncate}, which expresses
\[
N= \left\lfloor \vartheta\ln\left(\frac{1}{\varepsilon}\right)\right\rfloor.
\]

As a consequence, we use the iteration \eqref{eq:uepmild-inner11} with the starting point $u_{i}^{\varepsilon,0}=u_{i}^{\varepsilon}(x_0)\equiv u_{0}^{i,\varepsilon}$ to compute the regularized solution. In other words, we arrive at
\begin{eqnarray}
u_{i}^{\varepsilon}\left(x_{j+1},y_{l}\right) & \approx & \sum_{n=1}^{N}\left[\varphi_{n}^{i,\varepsilon}\left(x_{j+1}\right)+\int_{x_{0}}^{x_{j+1}}\kappa_{n}^{i,\varepsilon}\left(x_{j+1}-\xi\right)\left\langle f_i\left(\xi\right),\phi_{n}^{i}\right\rangle d\xi\right]\phi_{n}^{i}\left(y_{l}\right)
\nonumber
\\
 &  & -\sum_{n=1}^{N}\sum_{0\le r\le j}\int_{x_{r}}^{x_{r+1}}\kappa_{n}^{i,\varepsilon}\left(x_{j+1}-\xi\right)\left\langle \gamma_{i}\sin\left(\delta_{i1}u_{1}^{\varepsilon,r}+\delta_{i2}u_{2}^{\varepsilon,r}\right)+\sigma_{i1}u_{1}^{\varepsilon,r}+\sigma_{i2}u_{2}^{\varepsilon,r},\phi_{n}^{i}\right\rangle d\xi\phi_{n}^{i}\left(y_{l}\right),
\label{eq:asad}
\end{eqnarray}
for $0\le l\le \nu$, where we have separated the linear and nonlinear parts in \eqref{eq:uepmild-inner11}, and used the notations in Theorem \ref{thm:unique}.

The whole process of computation will be unambiguous if we do practice with the step $j=0$ as an example. Essentially, the process at $j=0$ is performed as follows:
\begin{itemize}
\item \uline{Step 1:} Set $U^{\varepsilon,0}=\left(u_{1}^{\varepsilon,0},u_{2}^{\varepsilon,0}\right)^{t}$
by a vectorial form in $y$:
\[
U^{\varepsilon,0}=\begin{bmatrix}u_{1}^{\varepsilon,0}\left(y_{0}\right) & \cdots & u_{1}^{\varepsilon,0}\left(y_{L}\right) & u_{2}^{\varepsilon,0}\left(y_{0}\right) & \cdots & u_{2}^{\varepsilon,0}\left(y_{L}\right)\end{bmatrix}^{t}\in\mathbb{R}^{2\left(L+1\right)};
\]

\item \uline{Step 2:} Compute $\mathbb{F}^{\varepsilon,0}=\bar{\gamma}\mathbb{A}^{\varepsilon,0}+\bar{\sigma}U^{\varepsilon,0}\in\mathbb{R}^{2\left(L+1\right)}$
where the block matrices $\bar{\gamma},\bar{\sigma}\in\mathbb{R}^{2\left(L+1\right)}\times\mathbb{R}^{2\left(L+1\right)}$
and the vector $\mathbb{A}^{\varepsilon,0}\in\mathbb{R}^{2\left(L+1\right)}$ are given by:
\[
\bar{\gamma}=\begin{bmatrix}\gamma_{1} & 0 & \cdots & \cdots & 0 & 0\\
0 & \ddots &  &  &  & 0\\
\vdots &  & \gamma_{1} &  &  & \vdots\\
\vdots &  &  & \gamma_{2} &  & \vdots\\
0 &  &  &  & \ddots & 0\\
0 & 0 & \cdots & \cdots & 0 & \gamma_{2}
\end{bmatrix},
\mathbb{A}^{\varepsilon,0}=\begin{bmatrix}\sin\left(\delta_{11}u_{1}^{\varepsilon,0}\left(y_{0}\right)+\delta_{12}u_{2}^{\varepsilon,0}\left(y_{0}\right)\right)\\
\vdots\\
\sin\left(\delta_{11}u_{1}^{\varepsilon,0}\left(y_{L}\right)+\delta_{12}u_{2}^{\varepsilon,0}\left(y_{L}\right)\right)\\
\sin\left(\delta_{21}u_{1}^{\varepsilon,0}\left(y_{0}\right)+\delta_{22}u_{2}^{\varepsilon,0}\left(y_{0}\right)\right)\\
\vdots\\
\sin\left(\delta_{21}u_{1}^{\varepsilon,0}\left(y_{L}\right)+\delta_{22}u_{2}^{\varepsilon,0}\left(y_{L}\right)\right)
\end{bmatrix},
\bar{\sigma}=\begin{bmatrix}\sigma_{11} & \cdots & 0 & \sigma_{12} & \cdots & 0\\
\vdots & \ddots & \vdots & \vdots & \ddots & \vdots\\
0 & \cdots & \sigma_{11} & 0 & \cdots & \sigma_{12}\\
\sigma_{21} & \cdots & 0 & \sigma_{22} & \cdots & 0\\
\vdots & \ddots & \vdots & \vdots & \ddots & \vdots\\
0 & \cdots & \sigma_{21} & 0 & \cdots & \sigma_{22}
\end{bmatrix};
\]

\item \uline{Step 3:} Using values of $\mathbb{F}^{\varepsilon,0}$,
compute the inner products $\left\langle \gamma_{i}\sin\left(\delta_{i1}u_{1}^{\varepsilon,0}+\delta_{i2}u_{2}^{\varepsilon,0}\right)+\sigma_{i1}u_{1}^{\varepsilon,0}+\sigma_{i2}u_{2}^{\varepsilon,0},\phi_{n}^{i}\right\rangle $
by the adaptive Filon-type method for $i\in\left\{ 1,2\right\} $;

\item \uline{Step 4:} Compute the remains (the linear terms) from the first summation on the right-hand side of \eqref{eq:asad}:
\[
\int_{x_{0}}^{x_{1}}\kappa_{n}^{i,\varepsilon}\left(x_{1}-\xi\right)\left\langle f_i\left(\xi\right),\phi_{n}^{i}\right\rangle d\xi,\;\mbox{and}\;\int_{x_{0}}^{x_{1}}\kappa_{n}^{i,\varepsilon}\left(x_{1}-\xi\right)d\xi,
\]
can be approximated by {an appropriate numerical integration}.

\end{itemize}

In order to illustrate the convergence, our numerical results are computed with the discrete $\ell_{2}$-norm:
\begin{eqnarray*}
E_{i}\left(x_{j}\right)&=&
\left\Vert u_{i}^{ex}\left(x_{j},\cdot\right)-u_{i}^{\varepsilon}\left(x_{j},\cdot\right)\right\Vert\\
&:=&\left(
\sum_{0\le l\le \nu}h_{l}^{y}\left|u_{i}^{ex}\left(x_{j},y_{l}\right)-u_{i}^{\varepsilon}\left(x_{j},y_{l}\right)\right|^2\right)^{1/2},
\quad i\in\left\{1,2\right\}.
\end{eqnarray*}

Concerning the mesh-width in $x$, the determination of $h_{j}^{x}$ as well as $x_{j+1}$ relies on the subprocess for building the \emph{primary} mesh, described in the flowchart below.
\begin{figure}[!h]
\centering
\parbox{10cm}{\convertMPtoPDF{fig.2.eps}{1}{1}}
\caption{The subprocess for building the mesh in $x$, conditioned by \eqref{eq:newres}.}
\label{fig:3}
\end{figure}

\begin{figure}[!h]
	\centering
	\begin{subfigure}[!h]{.5\linewidth}
		\centering
		\includegraphics[width=\textwidth]{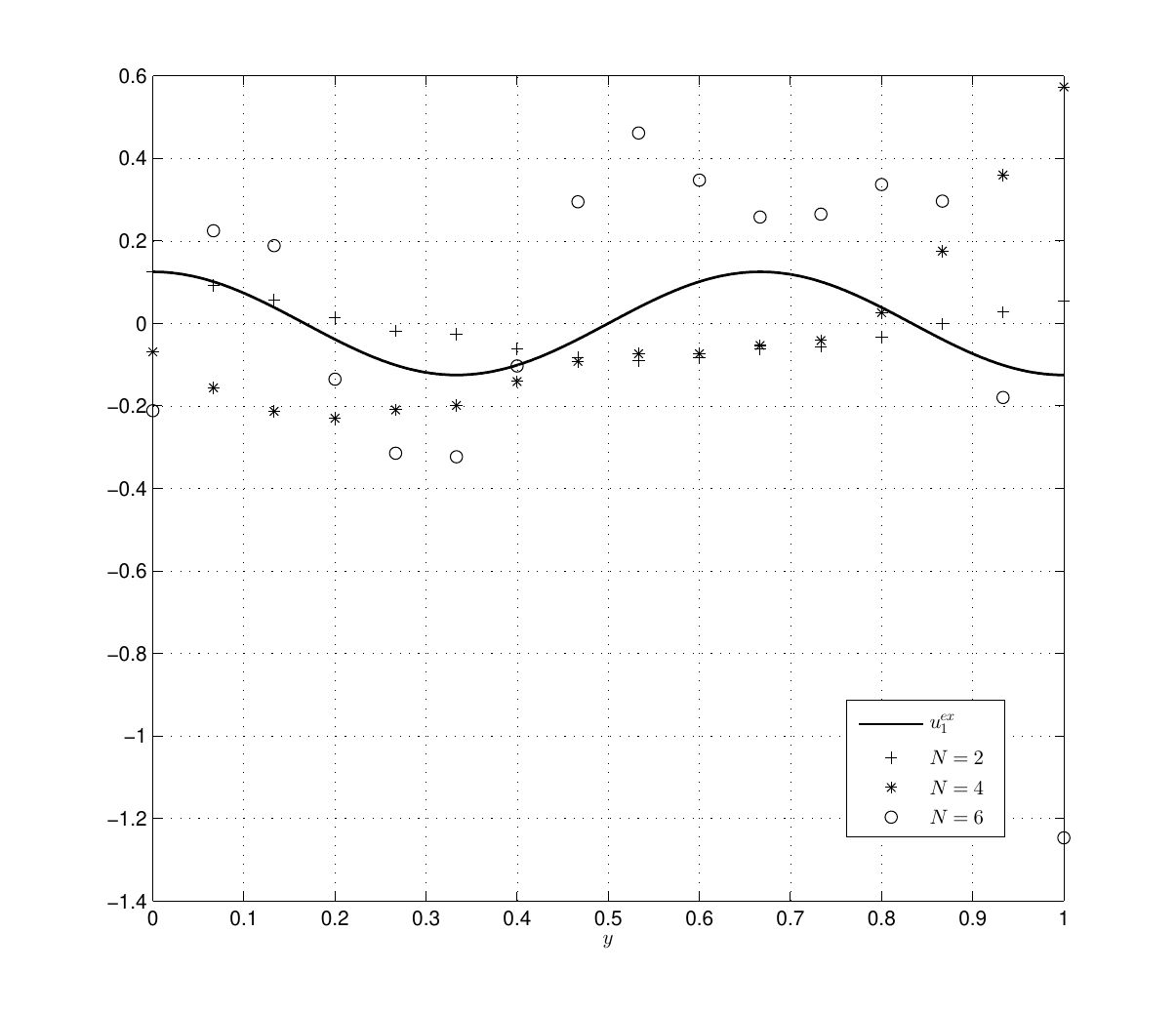}
		\subcaption{~}
	\end{subfigure}%
	\begin{subfigure}[!h]{.5\linewidth}
		\centering
		\includegraphics[width=\textwidth]{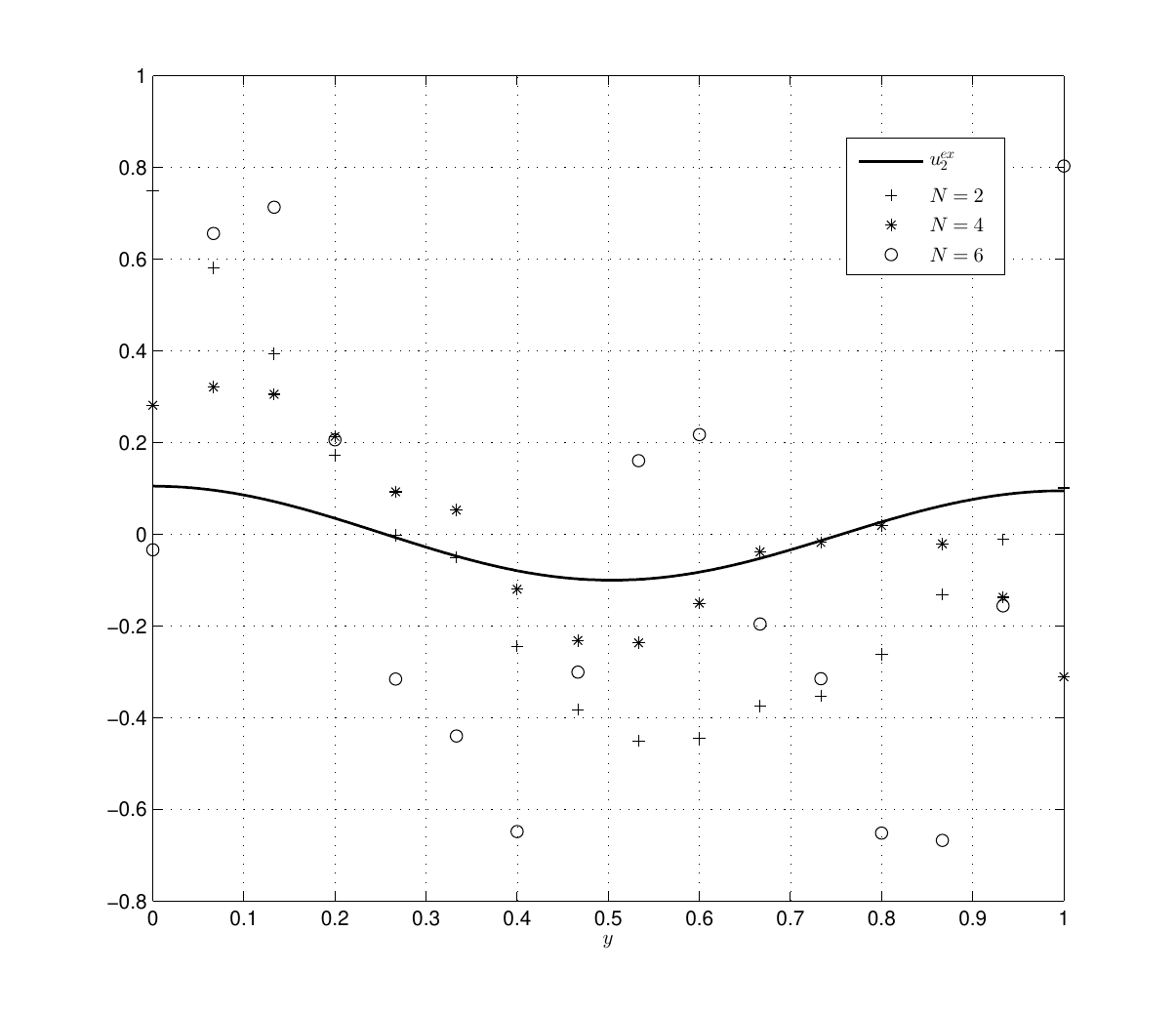}
		\subcaption{~}
	\end{subfigure}
	\caption{The unregularized solutions compared with $u_{1}^{ex}$ (left) and $u_{2}^{ex}$ (right), respectively, at $x = 0.05$ with several frequencies $N\in \left\{2,4,6\right\}$.}\label{fig:4}
\end{figure}

\begin{figure}[!h]
	\centering
	\begin{subfigure}[!h]{.5\linewidth}
		\centering
		\includegraphics[width=\textwidth]{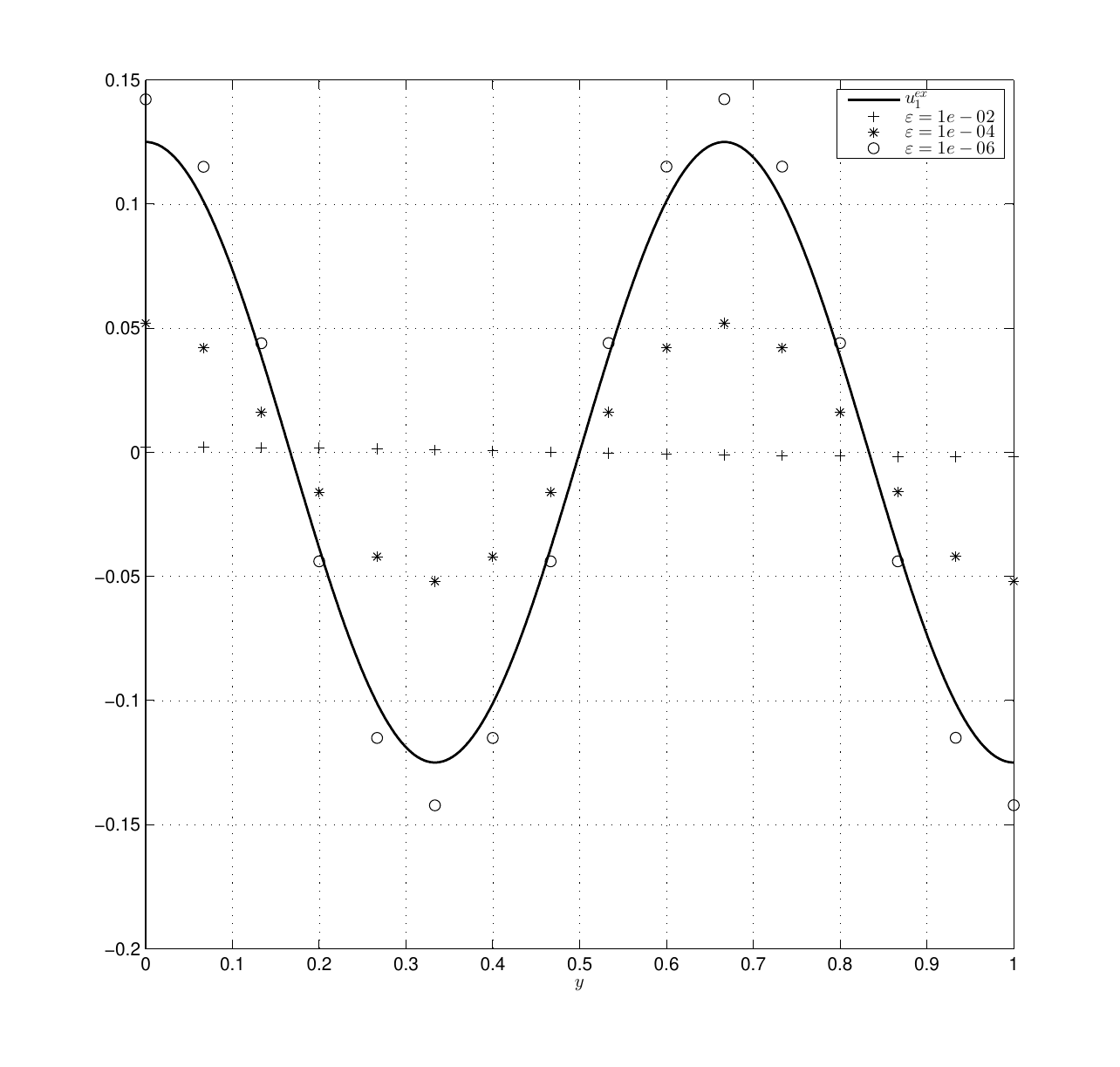}
		\subcaption{$u_{1}^{\varepsilon}$}
	\end{subfigure}%
	\begin{subfigure}[!h]{.5\linewidth}
		\centering
		\includegraphics[width=\textwidth]{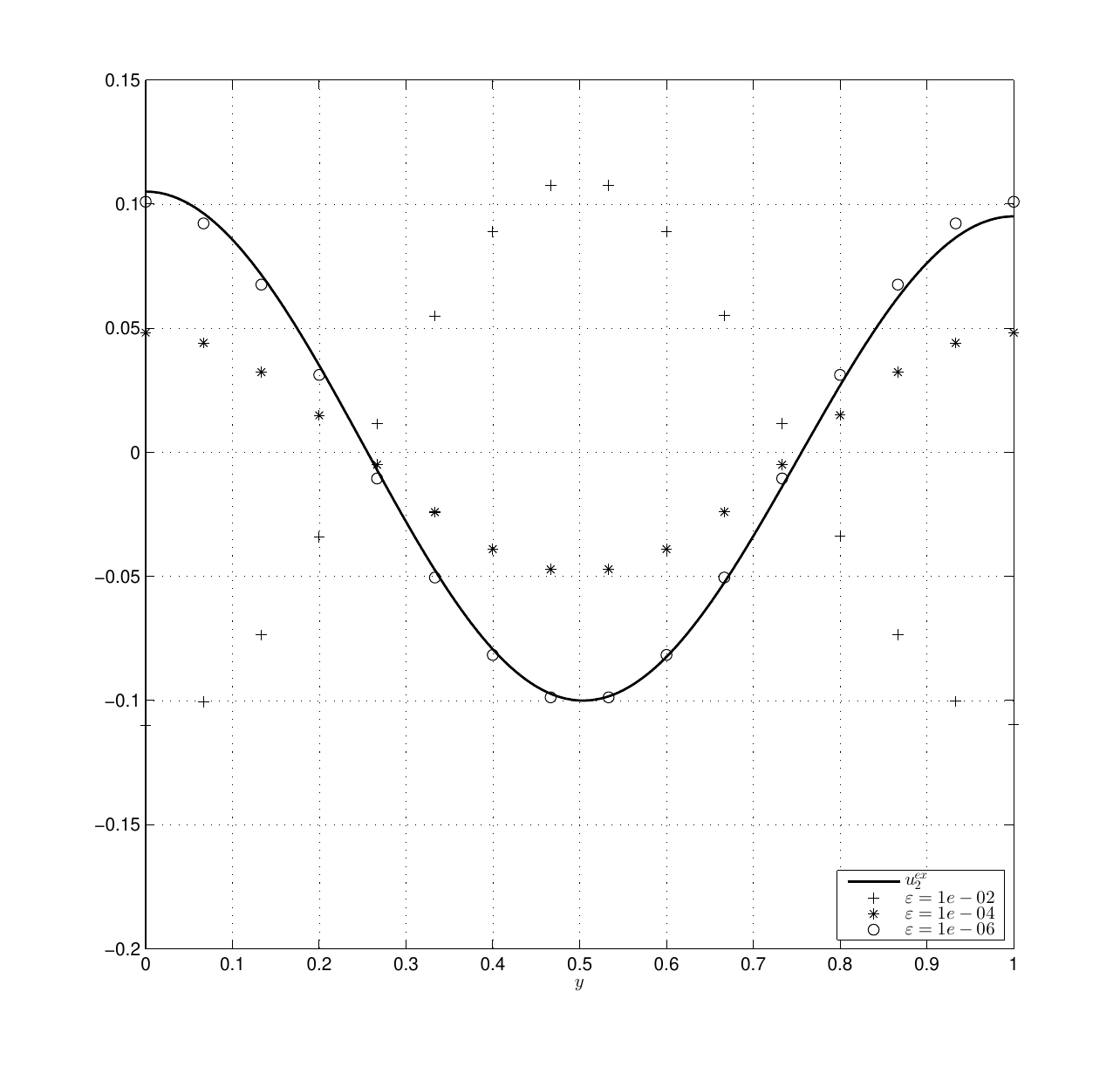}
		\subcaption{$u_{2}^{\varepsilon}$}
	\end{subfigure}
	\caption{The regularized solutions compared with $u_{1}^{ex}$ (left) and $u_{2}^{ex}$ (right), respectively, at $x=0.05$ with various amounts of noise $\varepsilon\in \left\{10^{-2},10^{-4},10^{-6}\right\}$.}\label{fig:5}
\end{figure}

\begin{figure}[!h]
	\centering
	\begin{subfigure}[!h]{.5\linewidth}
		\centering
		\includegraphics[width=\textwidth]{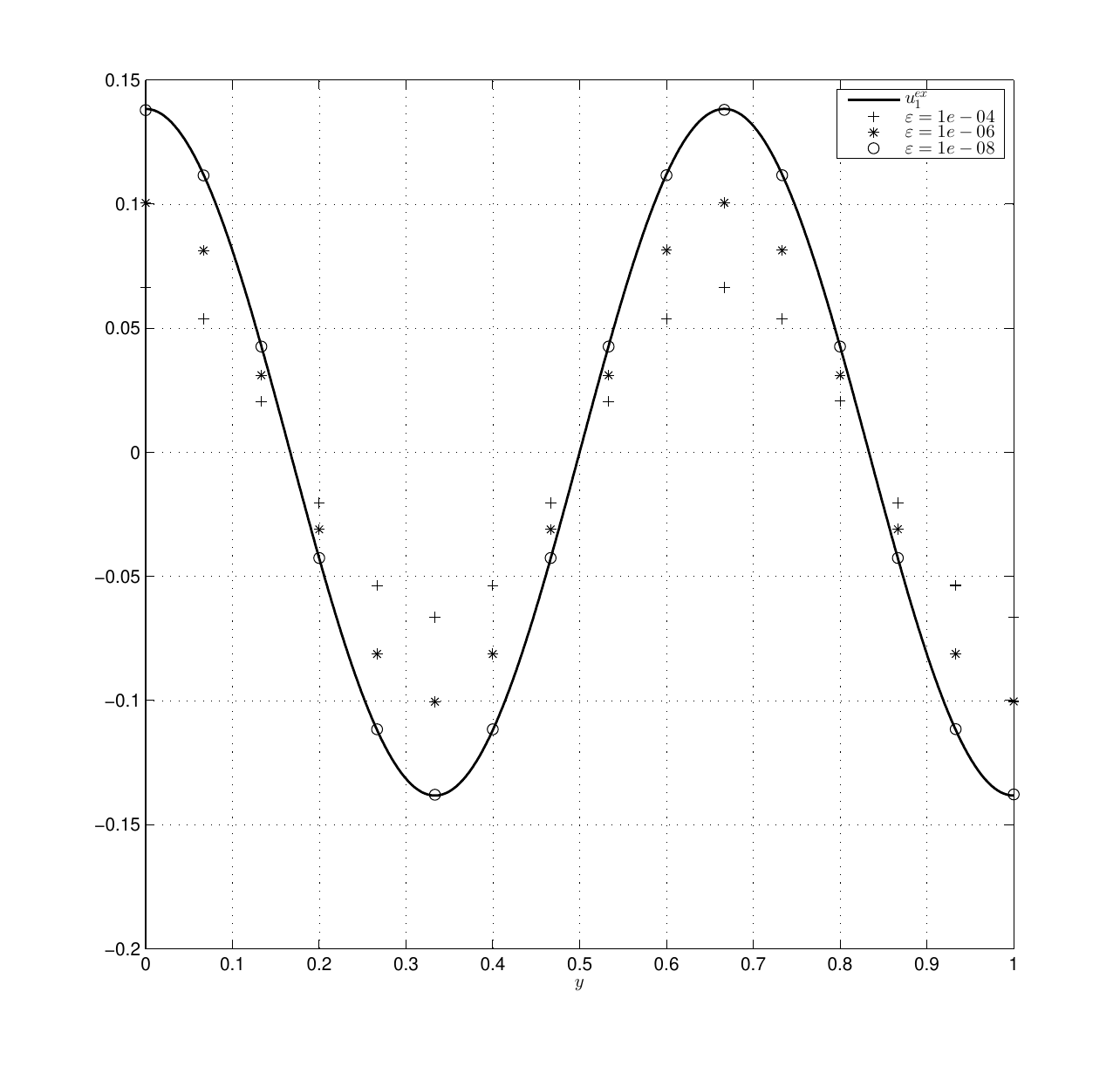}
		\subcaption{$u_{1}^{\varepsilon}$}
	\end{subfigure}%
	\begin{subfigure}[!h]{.5\linewidth}
		\centering
		\includegraphics[width=\textwidth]{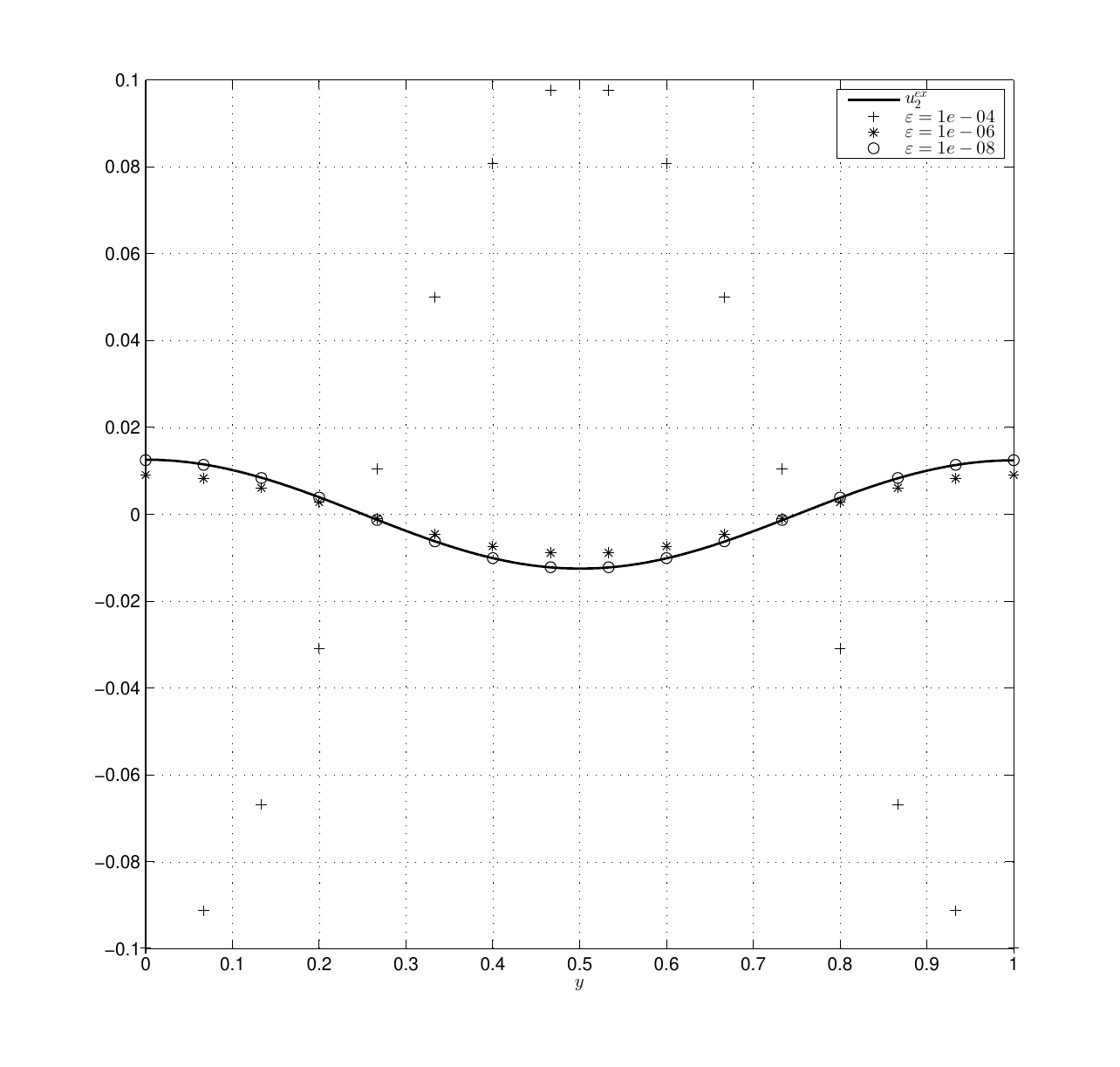}
		\subcaption{$u_{2}^{\varepsilon}$}
	\end{subfigure}
	\caption{The regularized solutions for $k=2$ compared with $u_{1}^{ex}$ (left) and $u_{2}^{ex}$ (right), respectively, at the first loop ($j=0$) with various amounts of noise $\varepsilon\in \left\{10^{-4},10^{-6},10^{-8}\right\}$.}\label{fig:6}
\end{figure}

\subsection*{Comments on numerical results}
Fixing $\nu = 15$, we begin the numerical verification by the implementation of the instability. {Note that due to the structure of forcing functions $f_{i}$, we apply the Gauss-Legendre method to approximate the integrals in Step 4 of the computational process}. In Figure \ref{fig:4}, we have plotted the unregularized solution at $x = 0.05$, computed by \eqref{eq:umild} with the exact Cauchy data. From this figure, when increasing the frequency $N$ up to 6, the obtained numerical solutions go far away the solid lines describing the corresponding exact solutions. It clearly shows the instability, albeit not being catastrophic much. It can also be seen that the distant the numerical solutions at points in $x$, the more the instability.

In the same spirit, the regularized solutions $u_{1}^{\varepsilon}$ and $u_{2}^{\varepsilon}$ for various amounts of noise $\varepsilon\in \left\{10^{-2},10^{-4},10^{-6}\right\}$ are compared in Figure \ref{fig:5} with the exact solutions at a selected point $x = 0.05$. Our intention to the selected values of noise is to compare the approximations with the above instability at the same frequencies. More precisely, we can see in Table \ref{tab:2} the corresponding admissible values of $N$ to the set of $\varepsilon$ being considered here. As introduced, we then proceed the computations with the measured Cauchy data and herein tabulated the $\ell^2$-errors in Table \ref{tab:33}. In this case, we take $k=1$, $r=0.1$ and choose the multiplicities
$m_{l}\in\left\{2,1,...,1,2 \right\}$.  This illustration agrees with our expectation that the numerical approximations become more accurate as $\varepsilon$ tends to 0.

In Figure \ref{fig:6}, we show the convergence of the numerical approximations at the first loop, i.e. $j=0$ when taking $k=2$.  Once again, this numerical result confirms our theoretical analysis and furthermore, the $\ell^2$-errors presented in Table \ref{tab:44} perform the good accuracy of the approximations. On the other side, the errors for $k=1$ are provided in  Table \ref{tab:44} to see that the error increases when $x$ goes far from the starting point 0 (compared with Table \ref{tab:33}).

Aside from the good approximations, the heavy workload of computations is viewed as the unique disadvantage. We easily get from the algorithm conditioned by \eqref{eq:newres} that the smaller the noise, the smaller the mesh $h_{j}^{x}$. For instance, from Figure \ref{fig:5} when reaching the point $x=0.05$ for just $u_{1}^{\varepsilon}$, we need to compute 8 points from $x=0$ for $\varepsilon = 10^{-2}$ whilst that number increases to 28 for $\varepsilon = 10^{-4}$, and then 126 for $\varepsilon = 10^{-6}$. However, we remark that such a requirement also decides to the convergence of the approximations, i.e. choosing small $r$ gives the more accuracy. Essentially, this trade-off is unavoidable and acceptable. 

\begin{table}
\begin{centering}
\begin{tabular}{|c|c|c|c|}
\hline 
 & $\varepsilon=10^{-2}$ & $\varepsilon=10^{-4}$ & $\varepsilon=10^{-6}$\tabularnewline
\hline 
$E_{1}$ & 9.3914$\times10^{-2}$ & 5.4648$\times10^{-2}$ & 1.2978$\times10^{-2}$\tabularnewline
\hline 
$E_{2}$ & 1.5809$\times10^{-1}$ & 4.0363$\times10^{-2}$ & 3.8271$\times10^{-3}$\tabularnewline
\hline 
\end{tabular}
\par\end{centering}

\caption{The numerical errors at $x=0.05$ for various amounts of noise $\varepsilon\in\left\{ 10^{-2},10^{-4},10^{-6}\right\} $.}\label{tab:33}
\end{table}

\begin{table}
\begin{centering}
	\begin{subtable}{.5\linewidth}
		\centering
		\begin{tabular}{|c|c|c|c|}
			\hline 
			& $\varepsilon=10^{-4}$ & $\varepsilon=10^{-6}$ & $\varepsilon=10^{-8}$\tabularnewline
			\hline 
			$E_{1}$ & 5.0302$\times10^{-2}$  & 5.4452$\times10^{-3}$  & 2.3830$\times10^{-4}$ \tabularnewline
			\hline 
			$E_{2}$ & 3.1348$\times10^{-2}$  & 4.1936$\times10^{-4}$  & 5.8834$\times10^{-5}$ \tabularnewline
			\hline 
		\end{tabular}
		\caption{$k=1$}
	\end{subtable}
	\begin{subtable}{.5\linewidth}
		\centering
		\begin{tabular}{|c|c|c|c|}
			\hline 
			& $\varepsilon=10^{-4}$ & $\varepsilon=10^{-6}$ & $\varepsilon=10^{-8}$\tabularnewline
			\hline 
			$E_{1}$ & 5.4141$\times10^{-2}$  & 2.8486$\times10^{-2}$  & 3.0083$\times10^{-4}$ \tabularnewline
			\hline 
			$E_{2}$ & 8.4573$\times10^{-2}$  & 2.5777$\times10^{-3}$  & 6.3485$\times10^{-5}$\tabularnewline
			\hline 
		\end{tabular}
		\caption{$k=2$}
	\end{subtable}
\end{centering}

\caption{The numerical errors at the first loop $j=0$ $\left(x=0.00625\right)$
for $k\in\left\{ 1,2\right\} $ and various amounts of noise $\varepsilon\in\left\{ 10^{-4},10^{-6},10^{-8}\right\} $.}
\label{tab:44}
\end{table}

\section{Concluding remarks}
\label{sec:conclusion}

We have opened a new stage on the regularization for the coupled elliptic sine-Gordon equations along with Cauchy data. The precedent versions where single equations, including \cite{TTK15}, are investigated can be found in the references. In the current paper, we have successfully extended the general kernel-based regularization method, commenced in \cite{TTTK15} and proposed in \eqref{eq:uepmild}, wherein the Gevrey-type regularity plays
a strongly powerful position in our theoretical analysis. We have demonstrated in Theorem \ref{thm:unique} and Theorem \ref{thm:3} the well-posedness of the regularized solution, and the centrepiece of this study was the derivation of the convergence rate in Theorem \ref{thm:5}. We have also ventured far beyond the existing investigation by rigorously focusing on the numerical procedures for the regularized solution, which were not well-treated in the past. This framework therein encompasses the use of the asymptotic expansions and the Filon-type methods in combination with the Picard-based iteration, which were carefully postulated in \cite{IN04,Olv08}, within the setting of discretization in two-dimensional. Due to the choice of the cut-off constant we primarily concentrated on the Filon-type methods.

Aside from the pioneering works in the field of regularization methods, we, from the numerical point of view, pointed out how big the cut-off constant is, which is more or less alike the statement by Zhang et al. \cite{ZW14}. On the other side, the simple algorithm searching for the suitable noise-dependent mesh-grid in $x$ was conditioned and proposed in Figure \ref{fig:3} whilst the mesh-grid in $y$ for the usage of the adaptive Filon-type method was illustrated in Figure \ref{fig:1}. The numerical implementation was implemented using MATLAB and the computations were done on a computer equipped with processor Intel Core i5-3330 4x3.20GHz and having 4.0 GB of RAM.

Our paper can be adapted to handle more complex scenarios. For instance, the parameters $\gamma_{i}$ can be more realistic if they depend on spatial variables, and they are in this sense viewed as the Josephson current densities. Furthermore, we can deal with the model problem with a large amount of coupled equations as well as high-dimension problems. However, we note that the more the equations and dimensions, the more complexity of computations we shall face.

\section*{Acknowledgment}
This work was initiated when V.A.K was visiting Faculty of Mathematics and Statistics, Ton Duc Thang University in Vietnam. The main results were completed when V.A.K enjoyed the hospitality of Department of Mathematics and Computer Science, Karlstad University in Sweden. V.A.K also acknowledges Prof. Ernst Hairer for fruitful discussions about the highly oscillatory integrals since his teaching tasks at GSSI in 2015. N.H.T  gratefully acknowledges stimulating discussions with
Prof. Daniel Lesnic.

\bibliography{mybib}

\end{document}